\newtheorem{theorem}{Theorem}[section]
\newtheorem{lemma}[theorem]{Lemma}
\newtheorem{corollary}[theorem]{Corollary}
\newtheorem{proposition}[theorem]{Proposition}
\newtheorem{sublemma}{}[theorem]
\numberwithin{equation}{section} 
\newcommand{\cl}{{\rm cl}}
\newcommand{\delete}{\backslash}
\newcommand{\bM}{\mathbb{M}}
\newcommand{\union}{\cup}
\newcommand{\intersect}{\cap}
\title{Circuit-Difference Matroids}
\author[G. Drummond]{George Drummond}
\address{School of Mathematics and Statistics \\
    University of Canterbury \\
    Christchurch, New Zealand}
\email{george.drummond@pg.canterbury.ac.nz}
\author[T. Fife]{Tara Fife}
\address{Mathematics Department\\
	Louisiana State University\\
	Baton Rouge, Louisiana}
\email{fi.tara@gmail.com}
\author[K. Grace]{Kevin Grace}
\address{School of Mathematics \\
    University of Bristol\\
    and
   Heilbronn Institute for Mathematical Research\\ 
    Bristol, United Kingdom}
\email{kevin.grace@bristol.ac.uk}
\author[J. Oxley]{James Oxley}
\address{Mathematics Department\\
	Louisiana State University\\
	Baton Rouge, Louisiana}
\email{oxley@math.lsu.edu}
\subjclass{05B35}
\keywords{circuit-difference matroids, binary matroids, regular matroids, skew circuits}
\begin{document}
\thispagestyle{empty}

\begin{abstract}
One characterization of binary matroids is that the symmetric difference of every pair of intersecting circuits is a disjoint union of circuits. This paper  considers circuit-difference matroids, that is, those  matroids in which the symmetric difference of every pair of intersecting circuits is a single circuit.  Our main result shows that   a connected regular matroid  is circuit-difference if and only if it contains no pair of skew circuits. Using a result of Pfeil, this enables us to explicitly determine all regular circuit-difference matroids. The class of circuit-difference matroids is not closed under  minors, but it is closed under series minors.  We characterize the infinitely many excluded series minors for the class. 
\end{abstract}

\maketitle

\section{Introduction}
A matroid $M$ is \emph{circuit-difference} if $C_1\triangle C_2$ is a circuit whenever $C_1$ and $C_2$ are distinct intersecting circuits of $M$. Evidently, all such matroids are binary. An example of such a matroid is the tipless binary $r$-spike, that is, the matroid  whose binary representation is  $[I_r\mid J_r-I_r]$, where $J_r$ is the $r\times r$ matrix of all ones. 
Subsets $X$ and $Y$ of $E(M)$ are \emph{skew} if $r(X\cup Y) = r(X) + r(Y)$. It is easy to check that no two circuits of the tipless binary $r$-spike are skew. The following is the main result of the paper. 

\begin{theorem}
\label{thm: main}
Let $M$ be a connected regular matroid. Then $M$ is a circuit-difference matroid if and only if it has no pair of skew circuits.
\end{theorem}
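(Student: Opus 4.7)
The plan is to handle both directions of the biconditional. The direction circuit-difference $\Rightarrow$ no skew circuits is elementary and uses only connectivity, while the converse requires regularity in an essential way.

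For the easier direction, I argue by contrapositive. Suppose $M$ has a skew pair of circuits $C_1, C_2$. Since $M$ is binary, skewness forces $C_1 \cap C_2 = \emptyset$ and makes the cycle space of $M | (C_1 \cup C_2)$ two-dimensional, so the only circuits of $M$ contained in $C_1 \cup C_2$ are $C_1$ and $C_2$ themselves. Connectivity of $M$ yields a circuit $C_3$ meeting both $C_1$ and $C_2$, and by the previous observation $C_3 \not\subseteq C_1 \cup C_2$. Now assume for contradiction that $M$ is circuit-difference. The set $D := C_1 \triangle C_3$ is then a single circuit, and since $C_1 \cap C_2 = \emptyset$, $D$ meets $C_2$ through the nonempty set $C_3 \cap C_2$. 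Applying the hypothesis again, $E := D \triangle C_2 = C_1 \triangle C_2 \triangle C_3$ is a single circuit, and $E$ meets $C_3$ through $C_3 \setminus (C_1 \cup C_2)$. A third application forces $E \triangle C_3 = C_1 \triangle C_2 = C_1 \cup C_2$ to be a single circuit, contradicting that $C_1$ is a proper dependent subset of $C_1 \cup C_2$.

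For the harder direction, suppose $M$ is connected, regular, with no skew circuits, and for contradiction suppose $M$ fails to be circuit-difference: there exist intersecting circuits $C_1, C_2$ with $C_1 \triangle C_2 = D_1 \sqcup \cdots \sqcup D_k$ a disjoint union of $k \ge 2$ circuits. The goal is to produce a skew pair of circuits somewhere in $M$. The naive hope that $D_1, D_2$ are themselves skew fails in general, since disjoint circuits in a binary matroid need not be skew; indeed $F_7$ has no skew circuits yet is not circuit-difference, so regularity must enter substantively. My plan is to work inside the regular matroid $N := M | (C_1 \cup C_2)$, whose cycle space has dimension at least two, and to exploit the exclusion of $F_7$ and $F_7^\ast$ as minors to show that either $D_1, D_2$ are already skew or one can replace the configuration by a strictly smaller one, via further symmetric-difference operations within $N$, until skewness is forced. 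A complementary route would be to invoke Seymour's decomposition of regular matroids into graphic, cographic, and $R_{10}$ pieces, verify the claim on each building block, and confirm preservation under 1-, 2-, and 3-sums.

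The main obstacle is pinpointing precisely how regularity prevents the Fano-style phenomenon in which a cycle decomposes into multiple circuits without producing any skew pair. Making this rigorous, whether by the direct analysis of $N$ sketched above or through Seymour decomposition, constitutes the bulk of the work, and I expect the delicate step to be a case analysis of how the $D_i$ interact with the intersection pattern of $C_1$ and $C_2$.
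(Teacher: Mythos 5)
Your first direction is correct and close in spirit to the paper's Lemma~\ref{lem: skew circuits -> not circuit-difference}: skew circuits are disjoint, a connecting circuit $C_3$ exists by connectivity and is not contained in $C_1\cup C_2$, and iterated symmetric differences force the non-circuit $C_1\cup C_2$ to be a circuit. (The paper does this in one step by observing that $C_1\triangle D$ and $C_2\triangle D$ intersect and have symmetric difference $C_1\cup C_2$, but your three-step version is sound; note you should check at each stage that the two circuits being compared are distinct, which does hold.) One factual slip in your commentary: $F_7$ \emph{is} circuit-difference --- every intersecting pair among its fourteen circuits (the seven lines and their complements) has a single circuit as symmetric difference --- so it does not witness the failure of the hard implication for binary matroids; the paper's witness is $S_8$. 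Your broader point that regularity is essential is nevertheless correct.

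The real problem is that the converse direction, which is the entire content of the theorem, is not proved: you state two possible strategies and explicitly defer "the bulk of the work." Neither sketch, as stated, closes the gap. The Seymour-decomposition route founders on the fact that circuit-difference is not minor-closed and that circuits of $3$-sums are awkward to control, so "verify on pieces and confirm preservation under sums" is not a routine check. The paper's actual argument takes a minimal counterexample $M$ with intersecting circuits $C_1,C_2$ minimizing $|C_1\cup C_2|$, shows $E(M)=C_1\cup C_2$, and proves the key structural fact (\ref{sublem: J1}) that the restriction to $Z=C_1\triangle C_2$ is connected and \emph{circuit-complementary}. It then needs Lemma~\ref{lem: M isomorphic to U14 or R_10} (connected cosimple regular circuit-complementary matroids are $U_{1,4}$ or $R_{10}$, proved via Seymour's theorem applied to the circuit-complementary matroid $M|Z$, not to $M$ itself), together with the choice of a minimal series class $X\subseteq C_1\cap C_2$ and the rank/corank count of Lemma~\ref{lem: I}, to reach a contradiction. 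The notions of circuit-complementarity and the classification of such matroids are the missing ideas; without them (or a worked-out substitute) your proposal establishes only the easy half of the equivalence.
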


To see that this theorem does not extend to all binary matroids, consider the matroid $S_8$ for which a binary representation is shown in 
Figure~\ref{fig: S8}. In this matroid, $\{1,4,7,8\}$ and $\{2,3,5,6,8\}$ are circuits whose symmetric difference is the disjoint union of the circuits $\{1,2,6\}$ and $\{3,4,5,7\}$. Thus $S_8$ is not a circuit-difference matroid. However, since $r(S_8) = 4$ and the only $3$-circuits of $S_8$ contain $6$, the matroid $S_8$ has no two skew circuits. Thus one implication of the last theorem fails for arbitrary connected binary matroids. However, as the next result shows, the other implication  does hold in the more general context.  The proof of this lemma will be given in the next section. 

\begin{figure}[H]
    \centering
\[
\bordersquare{&1&2&3&4&5&6&7&8\cr
&1&0&0&0&1&1&1&0\cr
&0&1&0&0&1&1&1&1\cr
&0&0&1&0&0&0&1&1\cr
&0&0&0&1&1&0&0&1\cr
}.
\]
    \caption{A binary representation for $S_8$.}
    \label{fig: S8}
\end{figure}

\begin{lemma}
\label{lem: skew circuits -> not circuit-difference}
Let $M$ be a connected binary matroid. If $M$ has a pair of skew circuits, then $M$ is not circuit-difference. 
\end{lemma}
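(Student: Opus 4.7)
I will argue by contradiction: assume $M$ is connected, binary, circuit-difference, and contains skew circuits $C_1$ and $C_2$. Skewness forces $C_1 \cap C_2 = \emptyset$, so $C_1 \triangle C_2 = C_1 \cup C_2$, which cannot be a circuit because it properly contains the circuit $C_1$. The goal is to exhibit two intersecting circuits whose symmetric difference is exactly $C_1 \cup C_2$, yielding a contradiction.

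Invoking the characterization that a matroid is connected if and only if every pair of elements lies in a common circuit, I would pick $e \in C_1$, $f \in C_2$, and a circuit $C_3$ containing both. The crux of the argument is to show that $C_3 \not\subseteq C_1 \cup C_2$. I plan to obtain this from a cycle-space count in the restriction $M|(C_1 \cup C_2)$: because $C_1, C_2$ are skew, this restriction has rank $|C_1| + |C_2| - 2$, hence nullity $2$, so its binary cycle space is two-dimensional and is spanned by the disjoint circuits $C_1$ and $C_2$. Its only non-empty members are therefore $C_1$, $C_2$, and $C_1 \cup C_2$. Of these, $C_1$ and $C_2$ each omit one of $e$ and $f$, and $C_1 \cup C_2$ is not a circuit, so no circuit contained in $C_1 \cup C_2$ can contain both $e$ and $f$. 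Hence some element $g$ lies in $C_3 \setminus (C_1 \cup C_2)$. I expect this cycle-space observation to be the main obstacle, as it is where the skew hypothesis really gets used; everything else is formal manipulation of symmetric differences.

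Once $g$ has been located the proof closes quickly. Because $e \in C_1 \cap C_3$ and $f \in C_2 \cap C_3$, the circuit-difference hypothesis guarantees that $C_4 := C_1 \triangle C_3$ and $C_5 := C_2 \triangle C_3$ are each single circuits, and both contain $g$; they are distinct since $C_1 \neq C_2$. Applying circuit-difference a second time to the intersecting circuits $C_4$ and $C_5$ forces $C_4 \triangle C_5 = C_1 \triangle C_2 = C_1 \cup C_2$ to be a circuit, contradicting the opening observation.
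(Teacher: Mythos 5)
Your proof is correct and follows essentially the same route as the paper's: take a circuit $C_3$ through both skew circuits, observe that it must leave $C_1\cup C_2$, and then note that $C_1\triangle C_3$ and $C_2\triangle C_3$ are intersecting circuits whose symmetric difference is the non-circuit $C_1\cup C_2$. Your cycle-space computation is just a more explicit justification of the step the paper asserts directly, namely that no circuit meeting both $C_1$ and $C_2$ can be contained in $C_1\cup C_2$.
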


The terminology used here will follow \cite{James} with one exception. In  \cite{James}, the term ``series extension'' for matroids is defined as the addition of an element $e$ to a matroid $M$
to create a matroid $M'$ in which $\{e,f\}$ is a cocircuit where $f$ is an element of $M$, and $M'/e= M$. It will be expedient here to use the term ``series extension'' more broadly. We shall call a matroid $M'$ a \emph{series extension} of $M$ if it is obtained  from $M$ by a sequence of one-element 
series-extension moves.

Pfeil~\cite{simon} defined a connected matroid $M$ to be \emph{unbreakable} if $M/F$ is connected for every flat $F$ of $M$. He proved that a matroid is unbreakable if and only if its dual has no two skew circuits, and he determined all unbreakable regular matroids. Combining Pfeil's two results gives the following. 

\begin{theorem}
\label{simon_thm} 
A non-empty connected regular matroid $M$ has no two skew circuits if and only if $M$ is a series extension of  one of the following matroids:  $U_{0,1}$, $U_{1,m}$ for some $m \ge 1$; $M^*(K_n)$ for some $n\ge 1$; $M(K_{3,3})$; or $R_{10}$.
\end{theorem}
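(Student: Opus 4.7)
The plan is to deduce Theorem~\ref{simon_thm} by combining, via duality, the two theorems of Pfeil quoted in the paragraph preceding its statement. By Pfeil's first result, $M$ has no two skew circuits if and only if $M^*$ is unbreakable. Since regularity and connectedness are preserved under matroid duality and $M$ is non-empty, $M^*$ is itself a non-empty connected regular matroid. Thus the task reduces to invoking Pfeil's classification of unbreakable regular matroids on $M^*$, and then translating that description of $M^*$ back into a description of $M$.

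For the translation step, I would use the fact that the one-element operation dual to a series extension (adding $e$ so that $\{e,f\}$ is a cocircuit with $M'/e = M$) is a parallel extension (adding $e$ so that $\{e,f\}$ is a circuit with $M' \delete e = M^*$). Hence $M'$ is a series extension of $M$ if and only if $(M')^*$ is a parallel extension of $M^*$. Dualizing the base matroids listed in Theorem~\ref{simon_thm} one by one gives $U_{0,1}^* = U_{1,1}$, $U_{1,m}^* = U_{m-1,m}$, $M^*(K_n)^* = M(K_n)$, $M(K_{3,3})^* = M^*(K_{3,3})$, and $R_{10}^* = R_{10}$ (since $R_{10}$ is self-dual). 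Thus Theorem~\ref{simon_thm} is equivalent to the statement that every non-empty connected unbreakable regular matroid is a parallel extension of one of $U_{1,1}$, $U_{m-1,m}$ (for some $m\ge 1$), $M(K_n)$ (for some $n\ge 1$), $M^*(K_{3,3})$, or $R_{10}$, which is exactly what Pfeil's classification must provide.

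The main obstacle, as far as I can see, is purely bookkeeping rather than matroid-theoretic: verifying that the parameter ranges ($m \ge 1$, $n \ge 1$) and degenerate cases (loops versus coloops, the empty graph, the trivial matroid $U_{0,1}$) match up correctly under dualization, and checking that Pfeil's classification genuinely lists every unbreakable regular matroid so that no case is lost when we pass to duals. Once these small verifications are in place, the theorem follows immediately from the combination of Pfeil's two results.
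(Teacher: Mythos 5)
Your proposal matches the paper exactly: the paper gives no independent proof of Theorem~\ref{simon_thm}, but obtains it precisely by combining Pfeil's characterization (unbreakable $\Leftrightarrow$ dual has no two skew circuits) with his classification of unbreakable regular matroids, dualizing just as you describe. Your dualization of the series/parallel operations and of the listed base matroids ($U_{0,1}^* = U_{1,1}$, $U_{1,m}^* = U_{m-1,m}$, $(M^*(K_n))^* = M(K_n)$, $(M(K_{3,3}))^* = M^*(K_{3,3})$, $R_{10}^* \cong R_{10}$) is correct, so the argument is sound and essentially identical to the paper's.
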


By combining this theorem with our main result, we have the following explicit description of all regular circuit-difference matroids. 

\begin{corollary}
\label{main_cor}
A  regular matroid $M$ is circuit-difference if and only if every component of $M$  is a series extension of one of the following matroids: $U_{0,1}$, $U_{1,m}$ for some $m \ge 1$; $M^*(K_n)$ for some  $n \ge 1$; $M(K_{3,3})$; or $R_{10}$.
\end{corollary}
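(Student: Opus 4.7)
The plan is to combine Theorem~\ref{thm: main} with Theorem~\ref{simon_thm} after reducing the corollary to its connected case. Since the circuit-difference property involves only intersecting circuits, and since regularity and connectedness behave well under the direct-sum decomposition into components, the reduction should be essentially bookkeeping; the two cited theorems then do all the genuine work.

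First, I would establish the following component-wise reduction: a matroid $N$ is circuit-difference if and only if every connected component of $N$ is circuit-difference. The forward direction is immediate, because circuits of a component are circuits of $N$, so the defining property restricts. For the reverse direction, suppose every component is circuit-difference, and let $C_1, C_2$ be intersecting circuits of $N$. Any element common to $C_1$ and $C_2$ lies in a unique component, and each circuit of $N$ is contained in a single component, so $C_1$ and $C_2$ both lie in the same component $N_0$. Then $C_1 \triangle C_2$ is a circuit of $N_0$ by hypothesis, hence a circuit of $N$.

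Next, I would apply the reduction to our regular matroid $M$. Each component of $M$ is connected and regular (regularity being preserved by direct summands and direct sums). By the previous step, $M$ is circuit-difference if and only if each component is circuit-difference. For a single connected regular component, Theorem~\ref{thm: main} says circuit-difference is equivalent to having no pair of skew circuits, and Theorem~\ref{simon_thm} says the latter is equivalent to being a series extension of one of the matroids $U_{0,1}$, $U_{1,m}$ (for some $m \ge 1$), $M^*(K_n)$ (for some $n \ge 1$), $M(K_{3,3})$, or $R_{10}$. Concatenating these equivalences yields the corollary.

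There is essentially no serious obstacle; the only point requiring a moment's care is the handling of degenerate components, in particular loops, which are covered by $U_{0,1}$, and isolated coloops or parallel classes, which are covered by $U_{1,m}$. One should also note that Theorem~\ref{simon_thm} is stated for non-empty connected regular matroids, but this is exactly the hypothesis satisfied by each component in the decomposition, so the application is legitimate.
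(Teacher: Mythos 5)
Your proposal is correct and matches the paper's intended argument: the paper gives no explicit proof, treating the corollary as an immediate combination of Theorem~\ref{thm: main} and Theorem~\ref{simon_thm}, applied component by component. Your explicit component-wise reduction (using that intersecting circuits lie in a common component) is exactly the bookkeeping the paper leaves implicit.
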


One can check explicitly, or deduce from the last corollary, that $M(K_4)$ is a circuit-difference matroid, but  that $M(K_4)/e$ is not circuit-difference for each element $e$. Thus the class of circuit-difference matroids is not minor-closed. We shall show that this class is closed under series minors and will characterize the  excluded series minors in Section \ref{sec: excluded minors}. 
In the next section, we prove some auxiliary results that will be used in the proofs of the main results. The proof of Theorem~\ref{thm: main} will be given in the third section.

\section{Preliminary Results}


We begin this section with the proof delayed from the last section.

\begin{proof}[Proof of Lemma~\ref{lem: skew circuits -> not circuit-difference}.]
Let $C_1$ and $C_2$ be skew circuits of $M$, and let $D$ be a circuit that meets $C_1$ and $C_2$.  Since $C_1$ and $C_2$ are skew, $|D-(C_1\union C_2)| > 0$. As is easily checked, 
$$D-(C_1\union C_2) = (C_1\triangle D) \intersect (C_2\triangle D).$$
Thus $C_1\triangle D$ meets $C_2\triangle D$. As their symmetric difference is the disjoint union of the circuits $C_1$ and $C_2$, we deduce that 
$M$ is not circuit-difference for either $C_1\triangle D$ or  $C_2\triangle D$ is not a  circuit, or both are circuits but their symmetric difference is not.
\end{proof}


The straightforward proof of the next lemma is omitted.

\begin{lemma}\label{lem: A}
Let $M$ be a matroid. If $M$ has a pair of skew circuits, then so does every series extension of $M$.
\end{lemma}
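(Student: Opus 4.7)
The plan is to reduce to the case of a single one-element series extension by induction and then verify the claim by a short rank computation. By the definition adopted in the paper, $M'$ is obtained from $M$ by a finite sequence of one-element series-extension moves, so by induction on the length of this sequence it suffices to prove the lemma when $M'$ arises from $M$ by a single move. Thus I may assume there exist $f \in E(M)$ and $e \in E(M') \setminus E(M)$ with $\{e,f\}$ a cocircuit of $M'$ and $M'/e = M$.

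Two standard observations will drive the rest of the argument. First, since any circuit meeting the cocircuit $\{e,f\}$ must meet it in at least two elements, the circuits of $M'$ are precisely the circuits $C$ of $M$ with $f \notin C$ together with the sets $C \cup \{e\}$ for the circuits $C$ of $M$ with $f \in C$. Second, because $M'/e = M$ and $e$ is not a loop of $M'$, we have $r_{M'}(X \cup \{e\}) = r_M(X) + 1$ for every $X \subseteq E(M)$, and hence $r_{M'}(X) = r_M(X)$ whenever $e \notin \cl_{M'}(X)$. Noting that $e \in \cl_{M'}(X)$ forces $X$ to contain a circuit of $M$ through $f$, and in particular forces $f \in X$, lets me detect which case of the rank formula applies.

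Now let $C_1$ and $C_2$ be skew circuits of $M$; submodularity of rank forces $C_1$ and $C_2$ to be disjoint. For each $i$, set $C_i' := C_i$ if $f \notin C_i$ and $C_i' := C_i \cup \{e\}$ otherwise, so each $C_i'$ is a circuit of $M'$ by the first observation. If $f \notin C_1 \cup C_2$, then $C_i' = C_i$ and $e \notin \cl_{M'}(C_1 \cup C_2)$, so the second observation yields $r_{M'}(C_i') = r_M(C_i)$ and $r_{M'}(C_1' \cup C_2') = r_M(C_1 \cup C_2)$, and skewness is inherited directly. If instead $f$ lies in exactly one of them, say $f \in C_1$, then $r_{M'}(C_1') = |C_1| = r_M(C_1) + 1$, $r_{M'}(C_2') = r_M(C_2)$, and
\[
r_{M'}(C_1' \cup C_2') = r_{M'}((C_1 \cup C_2) \cup \{e\}) = r_M(C_1 \cup C_2) + 1 = r_M(C_1) + r_M(C_2) + 1 = r_{M'}(C_1') + r_{M'}(C_2'),
\]
so $C_1'$ and $C_2'$ are skew in $M'$. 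The argument is essentially bookkeeping across the two cases above; there is no real obstacle, in keeping with the paper's description of the proof as straightforward.
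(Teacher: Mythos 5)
Your proof is correct: the reduction to a single series-extension move, the description of the circuits of $M'$, and the rank bookkeeping (using $r_{M'}(X\cup\{e\})=r_M(X)+1$ and the observation that $e\in\cl_{M'}(X)$ forces $f\in X$) all check out, and the case analysis is complete once disjointness of skew circuits is noted. The paper omits its proof of this lemma as straightforward, and your argument is exactly the standard one the authors presumably had in mind.
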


The next result makes repeated use of the fact that if a circuit in a matroid meets a $2$-cocircuit, then it contains that $2$-cocircuit.

\begin{lemma}\label{lem: B}
Let $M$ be a circuit-difference binary matroid and suppose that $M'$ is obtained from $M$ by adding an element $e$ in series to an element $f$ of $M$. Then $M'$ is circuit-difference.
\end{lemma}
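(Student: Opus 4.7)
The plan is to exploit the tight structural control that the $2$-cocircuit $\{e,f\}$ gives over the circuits of $M'$. Specifically, by the hint, every circuit of $M'$ either contains both $e$ and $f$ or contains neither. Combined with $M'/e = M$, this lets me identify the circuits of $M'$ as exactly: (i) the circuits of $M$ not containing $f$, and (ii) the sets $C \cup \{e\}$ where $C$ is a circuit of $M$ containing $f$. With this dictionary in hand, circuit-differences in $M'$ reduce to circuit-differences in $M$.

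So suppose $C_1'$ and $C_2'$ are distinct intersecting circuits of $M'$. I would split into three cases by how each circuit meets $\{e,f\}$. In the case where neither $C_i'$ contains $e$, both $C_i'$ are circuits of $M$ missing $f$, their intersection is nonempty in $M$, and their symmetric difference is a circuit of $M$ missing $f$ by the circuit-difference hypothesis on $M$; this circuit is then also a circuit of $M'$. In the case where both $C_i'$ contain $e$, both contain $f$, so $C_i' = C_i \cup \{e\}$ with $C_i$ a circuit of $M$ containing $f$; then $C_1' \triangle C_2' = C_1 \triangle C_2$, and since $f \in C_1 \cap C_2$ the circuits $C_1, C_2$ intersect in $M$, so $C_1 \triangle C_2$ is a circuit of $M$, and it avoids $f$, hence is a circuit of $M'$.

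The slightly more delicate case is when exactly one, say $C_1'$, contains $e$. Write $C_1' = C_1 \cup \{e\}$ with $f \in C_1$, while $C_2'$ is a circuit of $M$ missing $f$. Since $e \notin C_2'$ and $C_1' \cap C_2' \neq \emptyset$, I get $C_1 \cap C_2' \neq \emptyset$, so the circuit-difference property in $M$ yields that $C_1 \triangle C_2'$ is a circuit of $M$. The key observation is that $f \in C_1$ and $f \notin C_2'$ imply $f \in C_1 \triangle C_2'$, so this circuit contains $f$, and hence $(C_1 \triangle C_2') \cup \{e\} = C_1' \triangle C_2'$ is a circuit of $M'$. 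This $f$-tracking is the one point where a bookkeeping slip could occur, but it is forced by the $2$-cocircuit structure.

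With the three cases handled, $M'$ is circuit-difference. The only real work is writing down the circuit dictionary correctly at the start; everything after is a mechanical check, and the hint about $2$-cocircuits is used precisely to justify that dictionary (no circuit meets $\{e,f\}$ in a single element).
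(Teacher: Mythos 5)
Your proof is correct and follows essentially the same route as the paper's: both arguments rest on the fact that every circuit of $M'$ meets the cocircuit $\{e,f\}$ in both elements or neither, so that circuits of $M'$ correspond to circuits of $M=M'/e$, and both then run the same three-way case analysis on how $e$ sits relative to the two circuits. Your upfront ``circuit dictionary'' is just a cleaner packaging of what the paper verifies case by case via $M'/e$.
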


\begin{proof}
Let $D_1$ and $D_2$ be an intersecting pair of circuits of $M'$. Suppose first that $e\in D_1\cap D_2$. Then $f\in D_1\cap D_2$ and $\{D_1-e,D_2-e\}$ is an intersecting pair of circuits of $M'/e$. Thus $(D_1-e)\triangle (D_2-e)$, which equals $D_1\triangle D_2$, is a circuit of $M'/e$. Hence $D_1\triangle D_2$ or $(D_1\triangle D_2)\cup e$ is a circuit of $M'$. Because $f\notin D_1\triangle D_2$, the latter cannot occur. Hence $D_1\triangle D_2$ is a circuit of $M'$.

Assume next  that $e\in D_1-D_2$. Then $f\in D_1-D_2$. Now, $D_1-e$ is a circuit of $M'/e$. Moreover, $D_2$ is a circuit of $M'/e$ as otherwise $M'$ would have a circuit that contains $e$ and is contained in $D_2\cup e$. As such a circuit would avoid $f$, we have a contradiction. We now know that $(D_1-e)\triangle D_2$ is a circuit of $M'/e$ containing $f$, so $D_1\triangle D_2$ is a circuit of $M'$.

Finally, assume that $e\notin D_1\cup D_2$. Then $f\notin D_1\cup D_2$ and so $D_1$ and $D_2$ are circuits of $M'/e$. Hence so is $D_1\triangle D_2$. As this set avoids $f$, it must also be a circuit of $M'$ and the lemma is proved.
\end{proof}

In the proof of Theorem \ref{thm: main}, we will encounter a matroid with the property that the complement of every circuit is a circuit. We call such matroids \emph{circuit-complementary}. Such matroids that are binary form an interesting subclass of the class of circuit-difference matroids and are crucial in Section~\ref{sec: excluded minors} when considering the excluded series minors for the latter class.
\begin{lemma}\label{lem: D}
Let $M$ be a connected binary matroid that is circuit-comple\-mentary. Then  $M$ is a circuit-difference matroid.
\end{lemma}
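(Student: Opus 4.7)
The plan is to show that for any pair of intersecting circuits $C_1, C_2$ of $M$, the symmetric difference $C_1 \triangle C_2$ is a single circuit. Since $M$ is binary, we get for free that $C_1 \triangle C_2$ is a disjoint union of circuits, say $D_1, D_2, \ldots, D_k$, so the entire task reduces to ruling out $k \geq 2$.

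The key observation is that the hypothesis $C_1 \cap C_2 \neq \emptyset$ forces $C_1 \triangle C_2$ to be a \emph{proper} subset of $E(M)$, since every element of $C_1 \cap C_2$ lies in none of the $D_i$. Now suppose for contradiction that $k \geq 2$, and consider $D_1$ and $D_2$. By the circuit-complementary hypothesis, $E(M) \setminus D_1$ is a circuit, and since $D_2$ is disjoint from $D_1$ we have $D_2 \subseteq E(M) \setminus D_1$. Because no circuit can properly contain another, this containment must be equality, giving $D_1 \cup D_2 = E(M)$. But this contradicts $C_1 \triangle C_2 \subsetneq E(M)$. Hence $k = 1$ and $C_1 \triangle C_2$ is a circuit.

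The steps I would carry out, in order, are: (1) fix intersecting circuits $C_1, C_2$ and invoke binary-ness to write $C_1 \triangle C_2 = D_1 \dcup \cdots \dcup D_k$; (2) note that $C_1 \cap C_2 \neq \emptyset$ implies $C_1 \triangle C_2 \neq E(M)$; (3) apply the circuit-complementary property to $D_1$ and use the fact that circuits are incomparable under inclusion to conclude $D_2 = E(M) \setminus D_1$, producing the contradiction.

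I do not anticipate any genuine obstacle; the argument is a short containment chase. The connectedness hypothesis plays no active role in the argument itself, and indeed is essentially automatic from circuit-complementarity, since the complement of a circuit in a disconnected matroid would cross component boundaries and hence could not be a circuit.
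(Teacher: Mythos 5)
Your argument is correct and is essentially the paper's own proof, just with the containment chase written out explicitly ($D_2$ is a circuit contained in the circuit $E(M)\setminus D_1$, hence equals it, forcing $D_1\cup D_2=E(M)$ and contradicting that $C_1\triangle C_2$ avoids the non-empty set $C_1\cap C_2$) where the paper leaves it implicit. One small caveat on your closing aside: circuit-complementarity does not by itself force connectedness (the direct sum of two circuits is circuit-complementary but disconnected), though this is irrelevant here since connectedness is a stated hypothesis and, as you say, plays no active role in the argument.
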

\begin{proof}
Let $C_1$ and $C_2$ be an intersecting pair of circuits of $M$. Then $C_1\triangle C_2$ is a disjoint union of circuits. If there are at least two circuits in this union, then, since this union avoids $C_1\cap C_2$, we violate the property that the complement of every circuit is a circuit.
\end{proof}

Again, the proof of the next result is elementary and is omitted.

\begin{lemma}\label{lem: E}
Let $M$ be a connected binary matroid that is circuit-comple\-mentary. 
\begin{enumerate}[label=\textbf{\emph{(\roman*)}},leftmargin=*,align=left]
\item If $\{e,f\}$ is a cocircuit of $M$,  then $M/e$ is circuit-complementary.
\item If $M'$ is a series extension of $M$, then $M'$ is circuit-complementary.  
\end{enumerate}
\end{lemma}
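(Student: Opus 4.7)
Part (ii) reduces by induction to the case of a single one-element series extension, so I would prove (i) and the one-step case of (ii) by a common classification argument: describe the circuits of the new matroid explicitly and then pair each with its complement using the circuit-complementarity of $M$.

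For (i), because $\{e,f\}$ is a cocircuit of $M$, every circuit of $M$ either contains both $e$ and $f$ or is disjoint from $\{e,f\}$. I would use this to show that the circuits of $M/e$ split into two families: (A) circuits of $M$ disjoint from $\{e,f\}$, and (B) sets $C-e$ where $C$ is a circuit of $M$ containing $\{e,f\}$. Given a circuit $C'$ of $M/e$ of type (A), the set $E(M)-C'$ is an $M$-circuit containing $\{e,f\}$ by circuit-complementarity, so $(E(M)-C')-e$ is a circuit of $M/e$ of type (B), and it is precisely the complement of $C'$ in $E(M/e)$; the symmetric argument handles circuits of type (B).

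For the one-step case of (ii), the cocircuit $\{e,f\}$ of $M'$ together with $M'/e=M$ yield an analogous classification: the circuits of $M'$ are exactly the circuits of $M$ not containing $f$, together with the sets $C\cup e$ for circuits $C$ of $M$ containing $f$. Taking complements in $E(M')=E(M)\cup\{e\}$ swaps these two families via the circuit-complementarity of $M$. Iterating along the series-extension sequence then completes the proof of (ii).

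The main technical point I anticipate is verifying that the candidate complement sets are actual circuits of the new matroid and not merely dependent sets. I would argue this by contradiction: any smaller circuit contained in a candidate complement would come from a circuit of $M$ which, by the cocircuit constraint on $\{e,f\}$, would have to sit properly inside a known $M$-circuit, contradicting minimality.
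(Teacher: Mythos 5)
The paper omits this proof entirely, calling it elementary, so there is no argument of record to compare against; your write-up supplies exactly the standard details one would expect. Your classification of the circuits of $M/e$ and of the series extension $M'$ (using that a circuit meeting the cocircuit $\{e,f\}$ must contain both elements), together with the complement-swapping between the two families, is correct and complete, including the minimality check that the listed sets really are circuits.
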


\begin{lemma}\label{lem: F}
Let $M$ be a cosimple connected graphic matroid that is circuit-complementary. Then $M\cong U_{1,4}$.
\end{lemma}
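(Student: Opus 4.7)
My plan is to work directly in a graph $G$ with $M(G) \cong M$. Since $M$ is connected and cosimple, $G$ has no bridge and no $2$-edge cut, so every vertex of $G$ has degree at least $3$.

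The first step is to show that every cycle of $G$ is Hamiltonian and, consequently, that $G$ is $4$-regular. Let $C$ be a cycle of $G$; by hypothesis $C' := E(G) \setminus C$ is also a cycle. If some vertex $v$ lay outside $V(C)$, then all edges at $v$ would lie in $C'$, so $v$ would have degree at most $2$ in $G$, contradicting the minimum-degree bound. Hence $V(C) = V(G)$, and by symmetry $V(C') = V(G)$; each vertex therefore has exactly $2$ edges in $C$ and exactly $2$ edges in $C'$, so $G$ is $4$-regular and every cycle of $G$ is a Hamilton cycle.

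The central step is to rule out $|V(G)| \ge 3$. A pair of parallel edges is a $2$-cycle, so by the previous step parallel edges force $|V(G)| = 2$; hence we may assume $G$ is simple. Fix a Hamilton cycle $C_1 = v_1 v_2 \cdots v_n v_1$ of $G$ and let $C_2 := E(G) \setminus C_1$, an edge-disjoint Hamilton cycle. For any $C_2$-neighbor $v_i$ of $v_1$, the edge $v_1 v_i$ combined with the forward and the backward $v_1$-to-$v_i$ paths along $C_1$ produces two cycles of $G$ of lengths $i$ and $n - i + 2$; both must equal $n$, forcing $i = 2$ and $i = n$ simultaneously. But $v_1 v_2$ and $v_1 v_n$ already lie in $C_1$, so by simplicity they cannot lie in $C_2$. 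Hence $v_1$ has no $C_2$-neighbor, contradicting that $C_2$ is a Hamilton cycle through $v_1$.

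Therefore $|V(G)| = 2$, so $G$ consists of $k$ parallel edges and $M \cong U_{1,k}$. Cosimplicity gives $k \ge 3$, and applying the circuit-complementary property to any $2$-element circuit forces $k - 2 = 2$, i.e., $k = 4$. The main obstacle is the path-concatenation argument in the third step, where care is needed to verify that the claimed cycles are genuine simple cycles with the stated lengths; the rest of the proof is essentially a minimum-degree count.
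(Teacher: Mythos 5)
Your proof is correct, but it takes a genuinely different route from the paper's. The paper first invokes Lemma~\ref{lem: skew circuits -> not circuit-difference} and Lemma~\ref{lem: D} to conclude that $M$ has no two skew circuits, then argues that a cycle $C$ and its complementary cycle $E(G)-C$ must meet in exactly two vertices (else $2$-connectivity fails or two skew circuits appear), which forces $G$ to be two vertices joined by four internally disjoint paths; cosimplicity then collapses each path to a single edge. You instead avoid the skew-circuit machinery entirely: cosimplicity gives minimum degree at least $3$, the circuit-complementary property then forces every cycle to be Hamiltonian and $G$ to be $4$-regular, and the chord-length computation ($i=2$ and $i=n-i+2=n$ simultaneously) rules out $|V(G)|\ge 3$ in the simple case, with parallel edges handled separately via the Hamiltonian $2$-cycle. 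Your argument is more self-contained and purely local (degree counting plus cycle lengths), at the cost of being slightly longer; the paper's is shorter on the page because it leans on lemmas already established for the main theorem. The only points needing care in your write-up are ones you already flag or handle implicitly: that the concatenated paths really are cycles of the stated lengths (they are, since $3\le i\le n-1$ by simplicity and edge-disjointness of $C_1$ and $C_2$), and the degenerate case where $G$ has no cycle at all, which is excluded by cosimplicity exactly as in the paper.
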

\begin{proof}
Let $M=M(G)$. By Lemmas~\ref{lem: skew circuits -> not circuit-difference} and \ref{lem: D}, $M$ has no two skew circuits. Let $C$ be a cycle of $G$. Then $E(G)-C$ is a cycle $C'$ of $G$. Now, $C$ and $C'$ must have exactly two common vertices, otherwise $G$ is not 2-connected or $M$ has two skew circuits. It follows that $G$ has two vertices $u$ and $v$ that are joined by four internally disjoint paths where these paths use all of the edges of $G$. As $M(G)$ is cosimple, we deduce that $M\cong U_{1,4}$.
\end{proof}

The following lemma makes repeated use of the fact that in a loopless $2$-connected graph, the set of edges meeting a  vertex is a bond.
\begin{lemma}\label{lem: G}
Let $M$ be a cosimple connected cographic matroid that is circuit-complementary. Then $M\cong U_{1,4}$.
\end{lemma}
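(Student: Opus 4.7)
Write $M = M^*(G)$. Since $M$ is cosimple, $M(G)$ is simple, so $G$ is simple; since $M$ is connected, $M(G)$ is connected, so $G$ is $2$-connected. The circuits of $M$ are the bonds of $G$, and the circuit-complementary hypothesis translates to: the complement of every bond of $G$ is a bond of $G$.

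The plan is to apply this to $\delta(v)$ for each vertex $v$. By the stated fact, $\delta(v)$ is a bond, so $E(G) - \delta(v) = E(G-v)$ is also a bond. Write it as $\delta(S)$ for some $S \subseteq V(G)$ with $v \in S$ and both $G[S]$, $G[V-S]$ connected. Since no edge of $\delta(v)$ lies in $\delta(S)$, we have $N(v) \subseteq S$; since every edge not incident to $v$ lies in $\delta(S)$, both $S \setminus \{v\}$ and $V - S$ must be independent sets in $G$. The only edges of $G[S]$ are then those through $v$, so connectedness of $G[S]$ forces $S = \{v\} \cup N(v)$. The subgraph $G[V - S]$ is edgeless, connected, and nonempty, which forces $|V - S| = 1$.

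Hence every vertex has a unique non-neighbor, so $G$ is $(n - 2)$-regular on $n := |V(G)|$ vertices, and since $N(v)$ is independent for every $v$, the graph $G$ is triangle-free. To finish, fix an edge $vw$ and let $u$ be the unique non-neighbor of $v$. Any neighbor $x$ of $w$ other than $v$ and $u$ would lie in $N(v)$, making $xw$ an edge inside the independent set $N(v)$ — a contradiction. Hence $N(w) \subseteq \{v, u\}$, so $\deg(w) \le 2$, i.e., $n \le 4$. Combined with $n \ge 4$ (forced by $2$-connectedness and the regularity), we get $n = 4$ and $G = C_4$, whence $M = M^*(C_4) \cong U_{1,4}$.

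The main obstacle is correctly unpacking the equation $E(G - v) = \delta(S)$ into a structural description of $S$: one must combine the edge-crossing conditions (which give independence of both $S \setminus \{v\}$ and $V - S$, and the inclusion $N(v) \subseteq S$) with the required connectedness of both $G[S]$ and $G[V - S]$. Pinning down $|V - S| = 1$ is the crux, and it is what drives the final degree bound.
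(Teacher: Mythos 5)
Your proof is correct and takes essentially the same approach as the paper's: both apply the circuit-complementary hypothesis to the vertex bonds $\delta(v)$ and analyze what it means for $E(G)-\delta(v)$ to be a bond. The endgames differ slightly — the paper identifies $G\cong K_{2,n}$ and uses a third vertex to force $n=2$, whereas you derive $(n-2)$-regularity and triangle-freeness to bound $n\le 4$ — and your careful unpacking of why the complementary bond forces $V-S$ to be a single non-neighbor is in fact more detailed than the paper's rather terse assertion of that step.
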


\begin{proof}
Let $M=M^*(G)$. Then $G$ is $2$-connected and simple. Take a vertex $v$ of $G$ and let $C_1$ be the set of edges meeting $v$. Then $C_1$ is a bond in $G$ and hence a circuit of $M$. Thus $E(G) - C$ is also   a bond of $G$. Hence $G$ has a vertex $w$ that is not adjacent to $v$. Let $C_2$ be the set of edges meeting $w$. Then $E(G)=C_1\cup C_2$ and  $G$ is isomorphic to $K_{2,n}$ for some $n\geq 2$. Let $u$ be a vertex of $G$ other than $v$ or $w$. The complement of the set of edges meeting $u$ is a bond of $G$. Thus $n=2$ and $G$ is a $4$-cycle. Hence $M\cong U_{1,4}$.
\end{proof}

\begin{lemma}\label{lem: M isomorphic to U14 or R_10}
Let $M$ be a connected cosimple regular matroid that is circuit-complementary. Then $M$ is isomorphic to $U_{1,4}$ or $R_{10}$. 
\end{lemma}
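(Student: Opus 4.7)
The plan is to apply Pfeil's classification (Theorem \ref{simon_thm}) to reduce $M$ to a short list of possibilities and then knock out all but $U_{1,4}$ and $R_{10}$ using the preceding lemmas. First, since $M$ is binary and connected, Lemma \ref{lem: D} gives that $M$ is circuit-difference, so Lemma \ref{lem: skew circuits -> not circuit-difference} yields that $M$ has no pair of skew circuits. Theorem \ref{simon_thm} then tells us that $M$ is a series extension of one of $U_{0,1}$, $U_{1,m}$ (for some $m\geq 1$), $M^*(K_n)$ (for some $n\geq 1$), $M(K_{3,3})$, or $R_{10}$. I would next use the cosimple hypothesis to strip away the series extensions: each one-element series-extension step introduces a $2$-cocircuit, and $M$ has none, so the series extension must be trivial and $M$ is itself one of the matroids listed.

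The remaining task is to eliminate every entry on that list other than $U_{1,4}$ and $R_{10}$. I would handle each case directly. The loop $U_{0,1}$ has a unique circuit whose complement is empty, so it is not circuit-complementary. In $U_{1,m}$ every circuit has size $2$, and circuit-complementarity then forces $m-2=2$, leaving only $m=4$. For $M(K_{3,3})$ we are in the graphic setting of Lemma \ref{lem: F}, which would force $M(K_{3,3})\cong U_{1,4}$; this is absurd since $|E(K_{3,3})|=9$. For $M^*(K_n)$ we are in the cographic setting of Lemma \ref{lem: G}, which would force $M^*(K_n)\cong U_{1,4}$, requiring $\binom{n}{2}=4$, which has no integer solution. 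Only $U_{1,4}$ and $R_{10}$ survive, yielding the desired conclusion.

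There is no real obstacle here: the substantive structural work has already been packaged into Pfeil's theorem and into Lemmas \ref{lem: F} and \ref{lem: G}. The one point that requires a little care is the reduction from series extensions to the base matroids, which rests on the elementary observation that each series-extension step creates a $2$-cocircuit and so is blocked by the cosimple hypothesis.
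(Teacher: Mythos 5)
Your proof is correct, but it takes a genuinely different route from the paper's. The paper proves this lemma from scratch via Seymour's Regular Matroid Decomposition Theorem: after handling the graphic and cographic cases with Lemmas~\ref{lem: F} and~\ref{lem: G}, it shows that $M$ must be $3$-connected (ruling out $2$-sums using cosimplicity and the circuit-complementary property) and then rules out $3$-sums by forcing $M^*$ to be the non-regular matroid $AG(3,2)$. You instead pass through Lemma~\ref{lem: D} and Lemma~\ref{lem: skew circuits -> not circuit-difference} to conclude that $M$ has no two skew circuits, and then invoke Pfeil's classification (Theorem~\ref{simon_thm}); since that theorem is an external result quoted without proof and used by the paper only for Corollary~\ref{main_cor}, there is no circularity. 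Your case analysis is sound: cosimplicity does kill any nontrivial series extension, $U_{0,1}$ and $U_{1,m}$ with $m\neq 4$ fail circuit-complementarity directly, and Lemmas~\ref{lem: F} and~\ref{lem: G} eliminate $M(K_{3,3})$ and $M^*(K_n)$ by the size counts $|E(K_{3,3})|=9$ and $\binom{n}{2}\neq 4$. The trade-off is that your argument outsources the structural work to Pfeil's determination of unbreakable regular matroids (which itself rests on Seymour's theorem), whereas the paper's proof is self-contained modulo Seymour and confines its reliance on Pfeil to the corollary; your version is shorter, the paper's is more robust as a standalone piece of the development.
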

\begin{proof}
If $M$ is graphic or cographic, then, by Lemmas \ref{lem: F} and \ref{lem: G}, $M\cong U_{1,4}$. Now assume that $M$ is neither graphic nor cographic and is not isomorphic to $R_{10}$. Then, by Seymour's Regular Matroids Decomposition Theorem \cite{Seymour}, as $M$ is connected, it can be obtained from graphic matroids, cographic matroids and copies of $R_{10}$ by a sequence of $2$-sums and $3$-sums. Moreover, each matroid that is used to build $M$ occurs as a minor of $M$.
\begin{sublemma}\label{M 3-connected}
$M$ is $3$-connected.
\end{sublemma}
If $M$ is not $3$-connected, then $M$ has a $2$-separation, $(X,Y)$. Then $M$ is the $2$-sum, with basepoint $p$ say, of matroids $M_X$ and $M_Y$ with ground sets $X\cup p$ and $Y\cup p$, respectively. As $M$ is connected, so are $M_X$ and $M_Y$. Suppose $X$ is independent in $M$. Then $M_X$ must be a circuit with at least three elements. Thus $M$ is not cosimple, a contradiction. We may now assume that both $X$ and $Y$ contain circuits of $M$. Hence, by the circuit-complementary property, both $X$ and $Y$ are circuits of $M$. As $r(X)+r(Y)=r(M)+1$, we see that $(|X|-1)+(|Y|-1)=r(M)+1$ and, consequently,   $r^*(M)=|X|+|Y|-r(M)=3$. Then $M^*$ is a rank-$3$ simple binary connected matroid having $X$ and $Y$ as disjoint cocircuits. It follows that $M^*$ is graphic, a contradiction. We conclude that \ref{M 3-connected} holds.

We may now assume that there are matroids $M_1$ and $M_2$ each with at least seven elements such that $E(M_1)\cap E(M_2)$ is a triangle $T$ in both matroids and $M$ is the $3$-sum of $M_1$ and $M_2$ across this triangle. Moreover, $M_1$ and $M_2$ are both minors of $M$, and $E(M_i)-T$ spans $T$ in $M_i$ for each $i$. Let $X_i=E(M_i)-T$. Then $(X_1,X_2)$ is a $3$-separation of $M$. Suppose $X_1$ is independent in $M$. As $X_1$ spans $T$, it follows that $M_1$ has rank $|X_1|$, so $M_1^*$ has rank three and has $T$ as a triad. Since $M$ is cosimple, no element of $X_1$ is in a 2-circuit of $M_1^*$. As $M_1$ is binary, it follows that $|X_1|\leq 3$ so $|E(M_1)|\leq 6$, a contradiction. We conclude by the circuit-complementary property that both $X_1$ and $X_2$ must be circuits of $M$. Then, as $r(X_1)+r(X_2)=r(M)+2$, we have $r(M)=|X_1|+|X_2|-4$ so $M^*$ has rank four, has $(X_1,X_2)$ as a $3$-separation and has each of $X_1$ and $X_2$ as a cocircuit. Since $M^*$ is a disjoint union of cocircuits, it is affine. As $M^*$ is simple,   $|E(M^*)| \le 8$. But          
$|X_i|\geq 4$ for each $i$,  so $|E(M^*)| = 8$ and 
$M^*\cong AG(3,2)$. This contradicts the fact that $M$ is regular and thereby completes the proof of the lemma.
\end{proof}

\begin{lemma}\label{lem: I}
Let $M$ be a connected regular matroid and let $X$ be a series class in $M$. Then $M\delete X$ and $M/X$ cannot both be connected and circuit-complementary.
\end{lemma}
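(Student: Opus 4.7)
The plan is to assume for contradiction that both $M\setminus X$ and $M/X$ are connected and circuit-complementary, and to derive a contradiction by comparing their coranks via the classification in Lemma~\ref{lem: M isomorphic to U14 or R_10}.

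I would first establish that $X$ is independent in $M$. Since $X$ is a series class, any two elements of $X$ form a cocircuit of $M$, and because $M$ is binary, every circuit meeting $X$ must contain all of $X$. Consequently, if $X$ were dependent it would itself be a circuit, and then $r(X)+r(E-X)=(|X|-1)+(r(M)-|X|+1)=r(M)$ would make $X$ a proper non-empty separator of $M$, contradicting connectedness. (If $|X|=1$ this amounts to noting that connected matroids on more than one element have no loops.) Using $r^*_M(X)=1$ (since $X$ is a rank-one flat of $M^*$) together with the independence of $X$, I would then compute
\[
r^*(M\setminus X) = r^*(M) - 1 \quad\text{and}\quad r^*(M/X) = r^*(M).
\]

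Next I would pass to the cosimplifications $c(M\setminus X)$ and $c(M/X)$. Both are regular (as minors of regular matroids) and connected (since simplification of a connected matroid is connected, applied in the dual). Iterating Lemma~\ref{lem: E}(i), together with the fact that deleting a coloop does not change the circuit structure, shows that these cosimplifications are also circuit-complementary. By Lemma~\ref{lem: M isomorphic to U14 or R_10}, each is therefore isomorphic to $U_{1,4}$ or $R_{10}$, and thus has corank $3$ or $5$. Since contracting a non-loop and deleting a coloop both preserve corank, $r^*(M\setminus X)$ and $r^*(M/X)$ each lie in $\{3,5\}$. Combined with the displayed equations, this forces $r^*(M)\in\{3,5\}$ and $r^*(M)-1\in\{3,5\}$, which is impossible.

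The main obstacle is handling the cosimplification step: Lemma~\ref{lem: E}(i) is stated for connected matroids, whereas intermediate matroids obtained during cosimplification can temporarily fail to be connected. This can be addressed either by observing that the case analysis in the proof of Lemma~\ref{lem: E}(i) does not actually require connectedness, or by ordering the contractions so that connectedness is preserved at every step.
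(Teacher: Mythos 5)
Your proof is correct and takes essentially the same route as the paper's: both arguments use Lemma~\ref{lem: M isomorphic to U14 or R_10} (via Lemma~\ref{lem: E}) to force $M\setminus X$ and $M/X$ to be series extensions of $U_{1,4}$ or $R_{10}$, and then derive a contradiction from the corank count $r^*(M\setminus X)=r^*(M)-1$ versus $r^*(M/X)=r^*(M)$. The connectedness worry in your last paragraph is harmless, since a series contraction of a connected matroid is always connected.
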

\begin{proof}
Assume that the lemma fails. By Lemma \ref{lem: M isomorphic to U14 or R_10}, each of $M\delete X$ and $M/X$ is a series extension of $U_{1,4}$ or $R_{10}$. Note that $X$ is independent in $M$. For every series extension $M'$ of $U_{1,4}$ or $R_{10}$, we have that 
$$(r(M'),r^*(M')) \in \{(k+1,3),(k+5,5): k\geq 0\}.$$
Thus, for some non-negative integer $m$, 
$$(r(M\delete X),r^*(M\delete X)) \in \{(m+1,3),(m+5,5)\}.$$
Now let $|X|=t$. Then 
$(r(M),r^*(M))\in \{(m+t,4),(m+4+t,6)\}.$
Thus
$(r(M/X),r^*(M/X))\in \{(m,4),(m+4,6)\},$
so $M/X$ cannot be a series extension of $U_{1,4}$ or $R_{10}$, a contradiction.
\end{proof}

Although the following lemma is well known,  we include a proof for completeness.
\begin{lemma}\label{lemma I'}
Let $Y$ be a set in a connected matroid $M$ such that $|Y|\geq 2$ and $M|Y$ is connected. Let $W$ be a minimal non-empty subset of $E(M)-Y$ such that $M$ has a circuit $C$ such that $C\cap Y\neq \emptyset$ and $C-Y=W$. Then $W$ is a series class of $M|(Y\cup W)$.  
\end{lemma}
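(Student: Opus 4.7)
The plan is to identify $W$ as a series class of $N := M|(Y \cup W)$ by verifying the two defining properties: (i) every circuit of $N$ that meets $W$ contains $W$, so that $W$ is contained in a single series class of $N$, and (ii) no element of $Y$ lies in that series class, so that the class is exactly $W$.

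For (i), let $D$ be a circuit of $N$ with $D\cap W\neq\emptyset$. Since $D$ is then a circuit of $M$ contained in $Y\cup W$, I would split into two cases. If $D\cap Y\neq\emptyset$, then $D-Y$ is a nonempty subset of $W$, so the minimality of $W$ applied to the circuit $D$ forces $D-Y=W$, and hence $W\subseteq D$. If instead $D\cap Y=\emptyset$, then $D\subseteq W$; on the other hand the hypothesised circuit $C$ satisfies $W\subseteq C$ and $C\cap Y\neq\emptyset$, so $W\subsetneq C$, giving $D\subseteq W\subsetneq C$. This contradicts the fact that no circuit is properly contained in another, unless $D=W$. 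In either case $W\subseteq D$.

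For (ii), take any $y\in Y$. Since $M|Y$ is connected with $|Y|\geq 2$, no element of $Y$ is a coloop of $M|Y$, so there is a circuit $C_Y$ of $M|Y$ containing $y$. Then $C_Y$ is a circuit of $N$ that contains $y$ but is disjoint from $W$, which shows that $y$ is not in the same series class as any element of $W$. Combining (i) and (ii), any two elements of $W$ belong to exactly the same circuits of $N$ while no element of $Y$ does; thus $W$ is a series class of $N$. The only mildly delicate point is the subcase $D\cap Y=\emptyset$, where the use of the chosen circuit $C$ (and in particular the fact that $C\cap Y\neq\emptyset$) is what rules out a circuit strictly inside $W$; once this is handled the rest is straightforward.
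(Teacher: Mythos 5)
Your proof is correct and follows essentially the same route as the paper: minimality of $W$ forces every circuit of $M|(Y\cup W)$ meeting $W$ to contain all of $W$, and connectivity of $M|Y$ keeps elements of $Y$ out of that series class. (Your subcase $D\cap Y=\emptyset$ in fact never occurs, since $D\subseteq W\subsetneq C$ would put one circuit properly inside another — this is just the paper's observation that $W$ is independent — but your conclusion is unaffected.)
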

\begin{proof}
This is certainly true if $|W|=1$. Now, suppose that $d_1$ and $d_2$ are distinct elements of $W$ that are not in series in $M|(Y\cup W)$. Then $M|(Y\cup W)$ has a circuit $K$ containing $w_1$ and not $w_2$. As $W$ is independent, $K$ meets $Y$. But $K\cap W\subseteq W-w_2$. Thus we have a contradiction to the choice of $W$. We deduce that every two elements of $W$ are in series in $M|(Y\cup W)$. Since $M|Y$ is connected, no element of $Y$ is in series with an element of $W$. Thus $W$ is indeed a series class of $M|(Y\cup W)$.
\end{proof}


\section{The Proof of Main Theorem}

In this section, we prove the main result of the paper. 

\begin{proof}[Proof of Theorem \ref{thm: main}.]
Let $M$ be a regular connected matroid. By Lemma \ref{lem: skew circuits -> not circuit-difference}, if $M$ has a pair of skew circuits, then $M$ is not circuit-difference. To prove the converse, consider all connected regular matroids with no two skew circuits that are not circuit-difference, and choose $M$ to be such a matroid with the minimum number of elements. Then, by Lemma~\ref{lem: B}, $M$ is cosimple. Let $C_1$ and $C_2$ be a pair of intersecting circuits of $M$ such that $C_1\triangle C_2$ is not a circuit and $|C_1\cup C_2|$ is a minimum among such pairs. As $M|(C_1\cup C_2)$ is connected, we must  have that $E(M)=C_1\cup C_2$ by our choice of $M$. Now, $C_1\triangle C_2$ is a disjoint union of at least two circuits. 

\setcounter{theorem}{1}
\begin{sublemma}\label{sublem: J1}
If $D$ is a circuit of $M$ contained in $C_1\triangle C_2$, then 
$(C_1\triangle C_2)-D$ is a circuit of $M$.
\end{sublemma}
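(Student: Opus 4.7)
The plan is to apply the minimality of $|C_1 \cup C_2|$ twice, first to the pair $\{C_1, D\}$ and then to the pair $\{C_1 \triangle D, C_2\}$. The key identity
\[
(C_1 \triangle D) \triangle C_2 = (C_1 \triangle C_2) \triangle D = (C_1 \triangle C_2) - D
\]
(the second equality using $D \subseteq C_1 \triangle C_2$) will then deliver the conclusion once both applications succeed.

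First I would collect preliminary facts about $D$. Since $D \subseteq C_1 \triangle C_2$, the circuit $D$ is disjoint from $C_1 \cap C_2$. If $D$ missed $C_1 - C_2$, then $D \subseteq C_2 - C_1 \subseteq C_2$, forcing $D = C_2$ (no circuit properly contains another), which contradicts $C_1 \cap C_2 \neq \emptyset$. Symmetrically, $D$ meets $C_2 - C_1$. Moreover, $D \neq C_1 \triangle C_2$, for otherwise $C_1 \triangle C_2$ would itself be a circuit, contrary to the standing assumption; hence at least one of $(C_1 - C_2) - D$ or $(C_2 - C_1) - D$ is nonempty, and after swapping $C_1$ and $C_2$ if necessary I may assume $(C_2 - C_1) - D \neq \emptyset$.

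With these facts in hand, the two applications of minimality proceed as follows. For the first, $C_1$ and $D$ are distinct intersecting circuits (their intersection contains any element of $D \cap (C_1 - C_2)$), and any element of $(C_2 - C_1) - D$ witnesses $|C_1 \cup D| < |C_1 \cup C_2|$, so minimality forces $C_1 \triangle D$ to be a circuit. For the second, $C_1 \triangle D$ and $C_2$ intersect since $C_1 \cap C_2 \subseteq C_1 - D \subseteq C_1 \triangle D$; they are distinct by the identity above together with $D \subsetneq C_1 \triangle C_2$; and any element of the nonempty set $D \cap (C_1 - C_2)$ lies in $C_1 \cup C_2$ but not in $(C_1 \triangle D) \cup C_2$, so $|(C_1 \triangle D) \cup C_2| < |C_1 \cup C_2|$. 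Minimality then makes $(C_1 \triangle D) \triangle C_2 = (C_1 \triangle C_2) - D$ a circuit.

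The whole argument is essentially careful set-theoretic bookkeeping; the step that requires the most attention is verifying the strict inequalities of the unions so that the minimality hypothesis genuinely applies in both cases, and that at no point are the two circuits being combined secretly equal.
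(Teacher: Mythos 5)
Your proof is correct and follows essentially the same route as the paper: apply the minimality of $|C_1\cup C_2|$ first to the pair $\{C_1,D\}$ to get that $C_1\triangle D$ is a circuit, then to $\{C_1\triangle D, C_2\}$, and identify $(C_1\triangle D)\triangle C_2$ with $(C_1\triangle C_2)-D$. The only (cosmetic) difference is that the paper secures the inequality $|C_1\cup D|<|C_1\cup C_2|$ by noting that $D$ contains neither $C_1-C_2$ nor $C_2-C_1$, whereas you get it from $D\subsetneq C_1\triangle C_2$ plus a harmless relabelling of $C_1$ and $C_2$.
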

Clearly, $D$ meets both $C_1-C_2$ and $C_2-C_1$ but contains neither of these sets. The choice of $\{C_1,C_2\}$ implies that $C_1\triangle D$ is a circuit and hence that $(C_1\triangle D)\triangle C_2$ is a circuit. The last set is $(C_1\triangle C_2)-D$, so \ref{sublem: J1} holds.

Let $Z=C_1\triangle C_2$. As $M$ has no two skew circuits, $M|Z$ is connected and, by \ref{sublem: J1}, it is circuit-complementary. 
Thus, by Lemma \ref{lem: M isomorphic to U14 or R_10}, $M|Z$ is a series extension of $U_{1,4}$ or of $R_{10}$. Let $X$ be a minimal non-empty subset of $C_1\cap C_2$ such that $M$ has a circuit whose intersection with $C_1\cap C_2$ is $X$. Then, by Lemma \ref{lemma I'}, $X$ is a series class of $M|(Z\cup X)$. Thus every circuit of $M|(Z \cup X)$ that meets $X$ must contain $X$. 

\begin{sublemma}\label{sublem: 3.3.1'}
Every circuit of $M|Z$ is a circuit of $(M|(Z\cup X))/X$.
\end{sublemma}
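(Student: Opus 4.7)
The plan is to argue by contradiction, assuming $C$ is a circuit of $M|Z$ that is not a circuit of $N/X$, where $N=M|(Z\cup X)$. Then there exists a circuit $D$ of $N/X$ with $D\subsetneq C$. First I would use the standard characterization of circuits of a contraction together with Lemma~\ref{lemma I'}, which gives that $X$ is a series class of $N$: every circuit of $N/X$ is either a circuit of $N$ disjoint from $X$ (and so a circuit of $M|Z$) or is of the form $C'-X$ for a circuit $C'$ of $N$ that contains all of $X$. The first alternative produces a circuit of $M|Z$ strictly inside $C$, contradicting the minimality of $C$ as a circuit of $M|Z$, so we may assume $D\cup X$ is a circuit of $N$, and hence of $M$.

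Next I would invoke the minimality of $|C_1\cup C_2|$ on the intersecting pair $(C, D\cup X)$ of circuits of $M$ (they meet in $D\ne\emptyset$). We have $|C\cup(D\cup X)|=|C|+|X|$. Since $Z$ is a disjoint union of at least two circuits of $M|Z$ we have $|C|<|Z|$, and combined with $|X|\le|C_1\cap C_2|$ this gives $|C\cup(D\cup X)|<|Z|+|C_1\cap C_2|=|E(M)|=|C_1\cup C_2|$. Therefore $(C,D\cup X)$ cannot itself be a minimum-union pair of intersecting circuits with non-circuit symmetric difference, so $C\triangle(D\cup X)=(C-D)\cup X$ must be a circuit of $M$. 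Call this $\tilde C$. Thus $N$ has two circuits containing $X$ whose $X$-complements $D$ and $C-D$ are disjoint and partition $C$.

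The final step is to extract a contradiction. By Sublemma~\ref{sublem: J1}, $E=Z-C$ is a circuit of $M|Z$, and it is disjoint from $D\cup X$. Since $M$ has no two skew circuits, the binary matroid restricted to $E\cup D\cup X$ must contain a circuit $K$ distinct from $E$ and $D\cup X$ that meets both. I would case-split according to whether $X\subseteq K$ or $K\cap X=\emptyset$; in each case the property that the series class $X$ lies wholly inside or outside each circuit of $N$, together with further applications of the minimality of $|C_1\cup C_2|$ to newly-produced intersecting pairs, is used to extract either a circuit of $M$ whose intersection with $C_1\cap C_2$ is a proper non-empty subset of $X$ (contradicting the minimality of $X$) or a pair of skew circuits in $M$ (contradicting the standing hypothesis). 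The main obstacle I expect is precisely this last step: carefully tracking how the bridge circuit $K$ interacts with the circuit-complementary, series-extension structure of $M|Z$ and with the choice of $X$, and verifying that at least one of the two obstructions must arise regardless of whether $M|Z$ is a series extension of $U_{1,4}$ or of $R_{10}$.
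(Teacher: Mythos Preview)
Your setup is correct and tracks the paper closely: you correctly argue that the hypothetical proper sub-circuit in the contraction forces a circuit of $M$ of the form $D\cup X$, and the minimality bound $|C\cup(D\cup X)|=|C|+|X|<|Z|+|C_1\cap C_2|=|C_1\cup C_2|$ is exactly the inequality the paper uses to conclude that $(C-D)\cup X$ is again a circuit.

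The gap is the final step, and you are right to flag it as the obstacle. Your plan—produce a bridge circuit $K$ inside $(Z-C)\cup D\cup X$ from the no-skew-circuits hypothesis and then case-split on whether $X\subseteq K$—does not come with an actual argument, and it is not clear how either case forces a contradiction with the minimality of $X$ or the absence of skew circuits. (For instance, in the case $K\cap X=\emptyset$ you get a circuit of $M|Z$ contained in $(Z-C)\cup D\subsetneq Z$; circuit-complementarity of $M|Z$ then gives you another circuit, but this does not obviously violate anything.)

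The paper's resolution avoids this bridge-circuit analysis entirely. The key idea you are missing is to bring the original circuit $C_1$ back into play. In your notation, set
\[
A=C_1\triangle(D\cup X)\quad\text{and}\quad B=C_1\triangle(Z-C).
\]
Each of $D\cup X$ and $Z-C$ meets $C_1$, and each has union with $C_1$ strictly smaller than $C_1\cup C_2$ (the first because $C-D$ meets $C_2-C_1$, the second because $C$ meets $C_2-C_1$), so by minimality both $A$ and $B$ are circuits. One then checks that $A$ and $B$ both contain the nonempty set $(C-D)\cap(C_1-C_2)$ and both avoid the nonempty set $(C-D)\cap(C_2-C_1)$; hence $|A\cup B|<|C_1\cup C_2|$ and, again by minimality, $A\triangle B$ is a circuit. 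But $A\triangle B=(D\cup X)\triangle(Z-C)=(D\cup X)\cup(Z-C)$ is the disjoint union of two circuits, a contradiction. That single symmetric-difference trick with $C_1$ replaces your entire proposed case analysis.
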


Let $D$ be a circuit of $M$ that is contained in $Z$. Then $D$ meets both $C_1-C_2$ and $C_2-C_1$ and, by \ref{sublem: J1}, $Z-D$ is a circuit of $M$ that also meets both $C_1-C_2$ and $C_2-C_1$. Assume that $D$ is not a circuit of $(M|(Z\cup X))/X$. Then  $M|(Z\cup X)$ has a circuit $K$ such that $K \subseteq D \cup X$ and 
$K\cap D \neq D$. Thus $K$ meets and so contains $X$. Hence $K- D= X$.
As $|K \cup D| = |X \cup D| < |C_1 \cup C_2|$, it follows that $K\triangle D$ is a circuit of $M$ and hence that $K\triangle D$ meets $C_1 - C_2$ and $C_2 - C_1$.

As $|C_1\cup K|< |C_1 \cup D| < |C_1\cup C_2|$, 
  the choice of $\{C_1,C_2\}$ implies that $C_1\triangle K$ is a circuit $C$ of $M$ and that $C_1\triangle (Z-D)$ is a circuit $C'$ of $M$. As $C$ and $C'$ both contain the non-empty set $(D-K)\cap C_1$ and both avoid the non-empty set $(D-K)\cap C_2$, we see that $|C\cup C'|<|C_1\cup C_2|$ and $C\triangle C'$ is a circuit of $M$. This circuit is 
$[C_1\triangle K]\triangle [C_1\triangle (Z-D)],$ which equals $K\triangle (Z-D)$. But the last set is a disjoint union of two circuits, a contradiction. Thus \ref{sublem: 3.3.1'} holds.

We know that $M|Z$ is connected and circuit-complementary. Moreover,   the choice of $X$ implies that $M|(Z \cup X)$  has a circuit that meets $C_1 \cap C_2$ in $X$. Therefore $M|(Z \cup X)$ is connected. Moreover, by \ref{sublem: 3.3.1'}, $(M|(Z \cup X))/X$ is connected. It follows by 
 Lemma~\ref{lem: I} that $(M|(Z\cup X))/X$ is not circuit-complementary. Thus $(M|(Z\cup X))/X$ has a circuit $J$ such that $Z-J$ is not a circuit of 
$(M|(Z\cup X))/X$.  If $J$ is a circuit of $M|Z$, then, as $M|Z$ is circuit-complementary, $Z-J$ is a circuit of $M|Z$. Thus, by \ref{sublem: 3.3.1'}, we obtain the contradiction that $Z-J$ is a circuit of $(M|(Z\cup X))/X$. We deduce that $J$ is not a circuit of $M|Z$. Then $J \cup X'$ is a circuit $K$ of $M|Z$ for some non-empty subset $X'$ of $X$. By the choice of $X$, it follows that $X' = X$. 
Now, $Z=D\cup D'$ for some disjoint circuits $D$ and $D'$. We deduce using  \ref{sublem: 3.3.1'} that  $K$ meets both $D$ and $D'$ but contains neither. Hence $|K\cup D|< |C_1\cup C_2|$, so $K\triangle D$ is a circuit of $M$. As $|D'\cup (K\triangle D)|<|C_1\cup C_2|$, we see that $D'\triangle(K\triangle D)$ is a circuit of $M$, that is, $(Z-K)\cup X$ is a circuit of $M$. Thus $Z-J$ is a circuit of $(M|(Z\cup X))/X$, a contradiction.
\end{proof}

\section{Excluded Series Minors of Circuit-Difference Matroids}\label{sec: excluded minors}

In this section, we show  that the class of circuit-difference matroids is closed under series minors, and we characterize the infinitely many excluded series minors for this class.

\begin{lemma}
	The class of circuit-difference matroids is closed under series minors.
\end{lemma}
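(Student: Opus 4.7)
The plan is to reduce the statement to showing that the two operations that generate series minors—arbitrary deletion, and contraction of an element $e$ lying in a $2$-cocircuit $\{e,f\}$—each preserve the circuit-difference property. Induction on the length of a series-minor sequence then finishes the proof.

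Deletion is immediate. If $M$ is circuit-difference and $D_1, D_2$ are intersecting circuits of $M\delete e$, then they are intersecting circuits of $M$ avoiding $e$, so $D_1\triangle D_2$ is a circuit of $M$ by hypothesis, and since it also avoids $e$ it remains a circuit of $M\delete e$.

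For the series-contraction step, the starting observation is that, because $\{e,f\}$ is a cocircuit of $M$, a circuit of $M$ contains $e$ if and only if it contains $f$. This lets me classify the circuits of $M/e$ into two families: (i) circuits of $M$ that avoid both $e$ and $f$, and (ii) sets of the form $C-e$ for $C$ a circuit of $M$ containing both $e$ and $f$. A short minimality check (using that no circuit of $M$ contains $e$ but not $f$) shows that every set in family (ii) is indeed a circuit of $M/e$ and that these, together with the sets in (i), exhaust the circuits of $M/e$. Given two intersecting circuits $D_1,D_2$ of $M/e$, I lift each to a circuit $C_i$ of $M$ by setting $C_i=D_i$ in case (i) and $C_i=D_i\cup\{e\}$ in case (ii). In all three type-combinations the lifts $C_1$ and $C_2$ intersect in $M$—if both $D_i$ have type (ii) then $f\in C_1\cap C_2$, and otherwise the intersection in $M/e$ already supplies a common element—so the circuit-difference hypothesis applied to $M$ produces the circuit $C_1\triangle C_2$, from which $D_1\triangle D_2$ is immediately read off as the appropriate contraction.

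The only non-routine point, which is bookkeeping rather than a genuine obstacle, is verifying that $D_1\triangle D_2$ sits in the correct family for $M/e$. The tightest case is when both $D_i$ have type (ii): here $e,f\in C_1\cap C_2$, so $C_1\triangle C_2$ avoids both $e$ and $f$ and hence is a type-(i) circuit of $M/e$ equal to $D_1\triangle D_2$; the possibility $C_1=C_2$ is ruled out by $D_1\neq D_2$. The mixed case and the two type-(i) case are handled analogously, and together they close the induction.
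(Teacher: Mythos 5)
Your proof is correct and follows essentially the same route as the paper: deletion is immediate, and for series contraction the paper likewise identifies $\mathcal{C}(M/e)$ as $\mathcal{C}(M\backslash e)\cup\{C-e: f\in C\in\mathcal{C}(M)\}$ and then declares the remaining verification routine. Your write-up simply carries out that routine case check explicitly, and all three type-combinations are handled correctly.
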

\begin{proof}
	Let $M$ be a circuit-difference matroid. Evidently, $M\delete e$ is circuit-difference for all $e\in E(M)$. Now let $\{e,f\}$ be a cocircuit of $M$ and consider $M/e$. A circuit $C$ of $M/e$ contains $f$ if and only if $C\cup e$ is a circuit of $M$. Thus the collection ${\mathcal C}(M/e)$ of circuits of $M/e$ is ${\mathcal C}(M\delete e) \cup \{C-e: f \in C \in {\mathcal C}(M)\}.$ It is now routine to check that $M/e$ is a circuit-difference matroid. 
\end{proof}

Let $N_5$ be the $5$-element matroid that is obtained from a triangle by adding   single elements in parallel to exactly  two of its elements. This is easily seen to be an excluded series minor for the class of circuit-difference matroids. Although the next proposition is not needed for the proof of the main result of this section, it seems to be of independent interest.

\begin{proposition}\label{lem: C}
	A connected binary matroid $M$ has a pair of skew circuits if and only if $M$ has a series minor isomorphic to $N_5$.
\end{proposition}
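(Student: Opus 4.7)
The plan is to split the biconditional. For the reverse direction, I would assume $N_5$ is a series minor of $M$ and write $N_5 = M \setminus Y / X$ with each element of $X$ contracted through a $2$-cocircuit at its stage. Then $M \setminus Y$ is a series extension of $N_5$, and since $N_5$ has the skew pair $\{a, a'\}, \{b, b'\}$, Lemma~\ref{lem: A} produces skew circuits in $M \setminus Y$; these remain skew in $M$ because deletion preserves circuits and ranks on surviving sets.

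For the forward direction, I would choose skew circuits $C_1, C_2$ in $M$ and a circuit $D$ meeting both, minimizing $(|C_1|+|C_2|,\, |D|)$ lexicographically ($D$ exists by connectedness, and skewness forces $D \setminus (C_1 \cup C_2) \ne \emptyset$). Set $A = C_1 \cap D$, $A' = C_1 \setminus D$, $B = C_2 \cap D$, $B' = C_2 \setminus D$, and $C = D \setminus (C_1 \cup C_2)$: these five pairwise disjoint, nonempty sets partition $E(M')$, where $M' := M|(C_1 \cup C_2 \cup D)$. The plan is to show that $M'$ is a series extension of $N_5$ with these five sets as its series classes; once this is established, contracting all but one element from each series class (legal, since each class is a chain of $2$-cocircuits in the evolving matroid) produces $N_5$ as a series minor of $M$, finishing the forward direction.

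The structural claim reduces to verifying that $M'$ has nullity exactly $3$ with cycle space spanned by $\{C_1, C_2, D\}$. Skewness gives $r_M(C_1 \cup C_2) = |C_1|+|C_2|-2$, and adding $C$ is expected to contribute one further dependency, via $D$. To rule out extra dependencies, I would consider any circuit $L$ of $M$ contained in $C_1 \cup C_2 \cup D$ but outside the span of $\{C_1, C_2, D\}$: applying binary symmetric difference to $L$ and $D$ and extracting a suitable sub-circuit, the minimality of $(|C_1|+|C_2|,\, |D|)$ should force either a strictly smaller connecting circuit for $(C_1, C_2)$ or a strictly smaller pair of skew circuits, contradicting the choice. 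Once the nullity is $3$, a parity argument shows that whether $e \in E(M')$ belongs to a cycle-space element $\alpha C_1 + \beta C_2 + \gamma D$ depends only on which of $A, A', B, B', C$ contains $e$; hence elements of the same class lie in exactly the same circuits of $M'$. Since $M'$ is binary and connected, this equivalence of circuit memberships characterizes series equivalence, and a brief support computation shows that the only decomposable cycle-space element, $C_1 + C_2 = C_1 \sqcup C_2$, splits harmlessly along $C_1$ and $C_2$.

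The main obstacle will be the minimality-based exclusion of extraneous circuits in $C_1 \cup C_2 \cup D$; this is where the lexicographic minimality really earns its keep. Once nullity $3$ is secured, the series-class identification follows automatically from the parity calculation, and the final contractions reduce $C_1$ and $C_2$ to the parallel pairs $\{a, a'\}$ and $\{b, b'\}$ and $D$ to a triangle $\{a, b, c\}$ through one representative of each pair and the representative of $C$, yielding exactly $N_5$.
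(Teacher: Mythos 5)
Your reverse direction and the overall shape of your forward direction (pass to $M'=M|(C_1\cup C_2\cup D)$, identify series classes, contract down to $N_5$) match the paper's. The genuine gap is in the step you yourself flag as the main obstacle: ruling out circuits of $M'$ outside the span of $\{C_1,C_2,D\}$. The dangerous circuits are those $L$ contained in $C_1\cup(D-(C_1\cup C_2))$ that meet both $C_1$ and $D-(C_1\cup C_2)$ but avoid $C_2$ (and the symmetric case with $C_2$). Your lexicographic minimality of $(|C_1|+|C_2|,|D|)$ does not dispose of these: the pair $(L,C_2)$ need not beat $(C_1,C_2)$, since $|L|$ can equal or exceed $|C_1|$ and, although $L$ and $C_2$ are disjoint, they need not even be skew; and the connecting circuit hiding inside $L\triangle D$ can have total size \emph{larger} than $|D|$, because the short ``detour'' $L\cap(D-(C_1\cup C_2))$ gets traded for a possibly much longer arc of $C_1$. (Graphic picture: $C_1$ a long cycle, and $D$ reaching $C_2$ via a path that re-enters $V(C_1)$ through an outside vertex; swapping that path for the arc of $C_1$ shrinks $D-(C_1\cup C_2)$ but grows $|D|$.) The quantity that provably decreases when you symmetric-difference away such an $L$ is $|D-(C_1\cup C_2)|$, and that is exactly what the paper minimizes --- for an \emph{arbitrary} skew pair, with no minimization over $C_1,C_2$ at all.

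A second, related point: you are aiming at a structural claim stronger than what the paper establishes or needs. The paper never shows that $M'$ has nullity $3$, that $D-(C_1\cup C_2)$ is a series class of $M'$, or that $M'$ itself is a series extension of $N_5$. Its sublemma \ref{C.1} shows only that any two elements of each of $C_1-D$, $C_1\cap D$, $C_2-D$, $C_2\cap D$ form a cocircuit of $M'$; it then contracts those four sets to singletons and reads off $N_5$ as a series minor of the small matroid that results. Your parity argument deducing the five series classes from ``nullity $=3$'' is fine as far as it goes, but it sits downstream of the unproved exclusion step, so as written the proposal does not yet constitute a proof of the forward direction.
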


\begin{proof}
	If $M$ has  a series minor isomorphic to $N_5$, then, by Lemma~\ref{lem: A}, as $N_5$ has a pair of skew circuits, so does $M$. For the converse, let $C_1$ and $C_2$ be a pair of skew circuits of $M$, and let  $D$ be a circuit meeting both such that $|D-(C_1\cup C_2)|$ is a minimum. 
	Let $M'=M|(C_1\cup C_2\cup D)$. Next  we show the following.
	\begin{sublemma}\label{C.1}
		If $C_1-D$ or $C_1\cap D$ contains $\{x,y\}$, then $\{x,y\}$ is a cocircuit of $M'$.
	\end{sublemma}
	Suppose that this fails. Then $M'$ has a circuit $K$ that contains $x$ but not $y$. Assume first that $K$ meets $C_2$. Then, by the choice of $D$, we must have that $K-(C_1\cup C_2)=D-(C_1\cup C_2)$. Then $K\triangle D$ is a disjoint union of circuits that is contained in $(C_1\cup C_2)-y$ or $(C_1\cup C_2)-x$. But, for each $z$ in $C_1$, the matroid $(M|(C_1 \cup C_2)) \delete z$ has $C_2$ as its only circuit. As $K\triangle D \neq C_2$, we have a contradiction. 
 We deduce that $K$ avoids $C_2$. As $y\notin K$, we must have that $K\cap (D-(C_1\cup C_2))$ is non-empty. Then $K\triangle D$ is a disjoint union of circuits that does not contain $D-(C_1\cup C_2)$. One such circuit must meet $C_2\cap D$ and $C_1$. But this violates the choice of $D$. Thus \ref{C.1} holds.
	
	By \ref{C.1} and symmetry, we can perform a sequence of series contractions in $M'$, reducing each of the sets $C_1-D$, $C_1\cap D$, $C_2\cap D$, and $C_2-D$ to a single element. The resulting matroid is a series minor of $M$ that has two disjoint $2$-circuits such that deleting one element from each leaves a circuit with at least three elements. It follows that $M$ has $N_5$ as a series minor.
\end{proof}

We call a matroid {\em hyperplane-complementary} if the complement of every hyperplane is a hyperplane. 
One such matroid is the binary affine geometry $AG(r-1,2)$ of rank at least two. The next result determines all simple binary hyperplane-complementary matroids.  For all $k$, every  rank-$k$ flat of $AG(r-1,2)$ is isomorphic to $AG(k-1,2)$.

\begin{lemma}\label{lem: characterizing hyp-comp}
A simple rank-$r$ binary matroid $M$ is hyperplane-complement\-ary if and only if $r \ge 2$ and 
$M \cong AG(r-1,2)\delete X$  for some set  $X$ such that $AG(r-1,2)|X$ does not contain a copy of $AG(r-3,2)$.
\end{lemma}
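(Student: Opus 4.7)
I will split the biconditional into its two implications.

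For the ``if'' direction, let $M\cong AG(r-1,2)\setminus X$ with $AG(r-1,2)|X$ containing no copy of $AG(r-3,2)$. Since a copy of $AG(r-3,2)$ inside $AG(r-1,2)$ is precisely a rank-$(r-2)$ flat, the hypothesis says $X$ contains no rank-$(r-2)$ flat of $AG(r-1,2)$. I would first observe that $AG(r-1,2)$ itself is hyperplane-complementary: the hyperplane $\{v:v_1=1,\,\phi(v)=0\}$ has complement $\{v:v_1=1,\,(\phi+e_1^*)(v)=0\}$, which is another hyperplane. Now a hyperplane of $M$ has the form $H_M=H\setminus X$ for some hyperplane $H$ of $AG(r-1,2)$ with $H\setminus X$ spanning $H$, and its complement in $E(M)$ is $H^c\setminus X$ for the partner hyperplane $H^c$. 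If $H^c\setminus X$ failed to span $H^c$, it would lie in a rank-$(r-2)$ subflat $F$ of $H^c$, so $H^c\setminus F\subseteq X$; since $H^c\cong AG(r-2,2)$ is itself hyperplane-complementary, the set $H^c\setminus F$ is a rank-$(r-2)$ flat of $AG(r-1,2)$, contradicting the hypothesis on $X$.

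For the ``only if'' direction, let $M$ be a simple binary hyperplane-complementary matroid of rank $r$. One checks $r\ne 1$, since a single-element simple matroid has unique hyperplane $\emptyset$ whose complement $\{e\}$ has rank $1$; so $r\ge 2$. To embed $M$ in $AG(r-1,2)$, pick any hyperplane $H_M$ of $M$. The complementary cocircuit $E(M)\setminus H_M$ together with $H_M$ itself (which is a cocircuit by hyperplane-complementarity) has symmetric difference $E(M)$, so the all-ones vector lies in the cocircuit space, i.e., in the row space of any binary representation of $M$. After row operations the first row can be taken to be all-ones, so every column has first coordinate $1$, realizing $M$ as $AG(r-1,2)\setminus X$ for some $X$.

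The main obstacle is then showing $X$ contains no rank-$(r-2)$ flat of $AG(r-1,2)$. Suppose for contradiction $G\subseteq X$ is such a flat. There are exactly three hyperplanes $H_1,H_2,H_3$ of $AG(r-1,2)$ through $G$, and $AG(r-1,2)=G\sqcup P_1\sqcup P_2\sqcup P_3$ where $P_i:=H_i\setminus G$ is itself a rank-$(r-2)$ flat; crucially, every pair from $\{G,P_1,P_2,P_3\}$ unions to a hyperplane of $AG(r-1,2)$ (either some $H_i=G\cup P_i$ or some $H_i^c=P_j\cup P_k$). For each $i$, $H_i\setminus X\subseteq P_i$ has rank at most $r-2$, so $H_i$ is not a hyperplane of $M$; hyperplane-complementarity forces $H_i^c\setminus X$ to likewise have rank at most $r-2$, producing a rank-$(r-2)$ subflat $F_i'$ of $H_i^c$ contained in $X$. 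If any $F_i'$ equals $P_j$ or $P_k$, then $X\supseteq G\cup P_\ell=H_\ell$, a hyperplane, forcing $r(M)<r$ --- contradiction. Otherwise every $F_i'$ is ``mixed'', meeting both $P_j$ and $P_k$ in hyperplanes, which constrains $E(M)\cap P_j$ to lie in the intersection of two hyperplanes of $P_j\cong AG(r-3,2)$ for each $j$. For $r=4$ this already yields $r(E(M))\le 3(r-3)<r$ directly; for larger $r$ one iterates the argument inside each $P_j$, using that $AG(r-3,2)$ itself is hyperplane-complementary and the traces $F_i\cap P_j$ are ``parallel'' hyperplanes of $P_j$, eventually forcing a pair of parallel complementary rank-$(r-2)$ subflats of some hyperplane of $AG(r-1,2)$ into $X$ and hence that full hyperplane. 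The hard part will be making this iterative cascade clean for arbitrary $r$; the low-rank rank-bound argument is direct, but the general case seems to require either an explicit induction on $r$ or a slicker linear-algebra identification of a ``forbidden'' subflat that must lie in $X$.
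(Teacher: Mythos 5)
Your ``if'' direction and the first half of your ``only if'' direction are correct and essentially match the paper: the observation that the all-ones vector lies in the cocircuit space is just another way of saying that $E(M)$ is a disjoint union of cocircuits, hence that every circuit is even and $M$ is affine, which is how the paper realizes $M$ as $AG(r-1,2)\setminus X$. The genuine gap is exactly where you flag it yourself: the proof that $X$ cannot contain a rank-$(r-2)$ flat $G$ of $AG(r-1,2)$. Your partition $AG(r-1,2)=G\sqcup P_1\sqcup P_2\sqcup P_3$ and the deduction that each $(P_j\cup P_k)\cap E(M)$ has rank at most $r-2$ are fine, but the resulting ``cascade'' of further rank-$(r-2)$ flats forced into $X$ is only closed off for $r=4$; for general $r$ you offer neither the induction nor the ``slicker identification'' you say would be needed. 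As written, the hard implication is therefore not proved.

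The missing idea is that no iteration is needed: a single application of hyperplane-complementarity of $M$ finishes it. Pick any $y\in E(M)$ (possible since $r(M)=r\ge 2$) and let $H$ be the closure of $G\cup y$ in $AG(r-1,2)$. Since $G$ is a flat of rank $r-2$ and $y\notin G$, the set $H$ is a hyperplane of $AG(r-1,2)$, isomorphic to $AG(r-2,2)$, and because $G\subseteq X$ the trace $Y=H\cap E(M)$ is contained in $H-G$, which is a rank-$(r-2)$ flat (the complementary hyperplane of $G$ inside $H$). Meanwhile $W=E(M)-Y=E(M)-H$ is contained in the complementary hyperplane $E(AG(r-1,2))-H$, so $r(W)\le r-1$ and $W$ lies in some hyperplane $W'$ of $M$. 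Then $E(M)-W'\subseteq Y$ has rank at most $r-2$ and so is not a hyperplane of $M$, contradicting hyperplane-complementarity directly. This is the paper's argument; it replaces your three-hyperplane cascade entirely, so unless you carry out a genuine induction inside the $P_i$'s, you should substitute something like this one-step argument.
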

\begin{proof}
Suppose that $M$ is hyperplane-complementary. Then $r \ge 2$. Moreover, $E(M)$ is a disjoint union of cocircuits, so every circuit of $M$ has even cardinality. Hence 
we can view $M$ as $AG(r-1,2)\delete X$ for some set $X$. Let $E = E(AG(r-1,2))$. Then $E(M) = E - X$. 
Assume that $AG(r-1,2)|X$  contains a copy $Z$ of $AG(r-3,2)$. For $y\in E-X$, consider the closure $\cl_A(Z\cup y)$ of $Z \cup y$ in $AG(r-1,2)$. This closure is a rank-$(r-1)$ flat of $AG(r-1,2)$ and is thus isomorphic to $AG(r-2,2)$. Let $Y=\cl_A(Z\cup y)\cap (E-X)$ and $W=(E-X)-Y$.  Then $Y$ is contained in some copy of $AG(r-3,2)$, and $W$ is contained in some copy of $AG(r-2,2)$. Thus $r(Y) \le r-2$ and $r(W) \le r-1$. Hence $W$ is contained in a hyperplane $W'$ of $M$ whose complement  in $E(M)$ is not a hyperplane. Thus $M$ is not hyperplane-complementary, a contradiction.

Now let $M = AG(r-1,2)\delete X$  where $r \ge 2$ and $AG(r-1,2)|X$  does not contain a copy of $AG(r-3,2)$. Let $H$ be a hyperplane of $AG(r-1,2)$. 
Then $AG(r-1,2)|H = AG(r-2,2)$. 
If $r(H-X)\leq r-2$, then $H-X$ is contained in some copy of $AG(r-3,2)$  that is contained in $H$ and so, as $AG(r-2,2)$ is hyperplane-complementary, $X$ contains a copy of $AG(r-3,2)$. This contradiction implies that the hyperplanes of $M$ are all of the   sets of the form $H-X$ where $H$ is a hyperplane of $AG(r-1,2)$. As  $AG(r-1,2)$ is  hyperplane-complementary, so is  $M$.
\end{proof}

Recall that $AG(r-1,2)$  is obtained from the projective geometry $PG(r-1,2)$ by deleting a hyperplane, that is, by deleting  a copy of $PG(r-2,2)$.  It is a well-known consequence of the unique representability of binary matroids that if $PG(r-1,2)|E_1\cong PG(r-1,2)|E_2$, then $PG(r-1,2)\delete E_1\cong PG(r-1,2)\delete E_2$. Thus, as all single-element deletions of $PG(r-2,2)$ are isomorphic,   there is, up to isomorphism,    a unique simple binary rank-$r$ single-element extension of  $AG(r-1,2)$. We shall denote this extension by  $AG(r-1,2)+ e$.

Let $\bM$ be the set of all matroids of rank at least three of the form $[AG(r-1,2)+e]\delete X$ such that $AG(r-1,2)\delete X$ is hyperplane-complementary of rank $r$. Thus $N_5^*$ is the unique rank-$3$ member of $\bM$ while its rank-$4$ members are the tipped binary $4$-spike and a non-tip deletion thereof, that is,  $S_8$. We now  show that the duals of the matroids in $\bM$ are precisely the  excluded series minors for the class of circuit-difference matroids.

\begin{theorem}
	A binary matroid $M$ is an excluded series minor for the class of circuit-difference matroids if and only if  $M^*\in \bM$.
\end{theorem}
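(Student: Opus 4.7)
My plan is to prove both directions of the theorem.

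For the sufficiency direction, assume $M^* = [AG(r-1,2)+e]\setminus X \in \bM$, with $N := AG(r-1,2)\setminus X$ hyperplane-complementary of rank $r \geq 3$. I first exhibit a bad pair in $M$. The triangles of $AG(r-1,2)+e$ through $e$ have the form $\{e,p,p+e\}$ and partition $AG(r-1,2)$ into pairs. Since $r \geq 3$, I can choose an affine hyperplane $H$ of $AG(r-1,2)$ whose direction subspace of $GF(2)^{r-1}$ omits $e$; a parity computation (using that any $T\subseteq AG$ with $\sum T = e$ has $|T|$ even, while elements of $H$ all share the same value under the defining functional) shows no such $T$ lies entirely in $H$, so $e \notin \cl_{M^*}(H\setminus X)$, and the symmetric argument gives $e \notin \cl_{M^*}(\bar H\setminus X)$. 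The condition on $X$ (that $AG(r-1,2)|X$ contains no copy of $AG(r-3,2)$) guarantees $H_1 := H\setminus X$ has rank $r-1$ in $N$; combined with hyperplane-complementarity of $N$, both $H_1$ and $\bar H_1 := E(N)\setminus H_1$ are hyperplanes of $N$ and hence of $M^*$. Their $E(M)$-complements $C_1 := \bar H_1 \cup \{e\}$ and $C_2 := H_1 \cup \{e\}$ are circuits of $M$ intersecting in $\{e\}$, with $C_1 \triangle C_2 = E(M) \setminus \{e\}$. Since $\{e\}$ has rank $1 < r-1$ in $M^*$, it is not a hyperplane, so $C_1\triangle C_2$ is not a circuit of $M$. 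To show every proper series minor is circuit-difference, I use that $M$ is cosimple (as $M^*$ is simple), so all such minors are deletions $M\setminus f$. For $f = e$, $(M\setminus e)^* = M^*/e$ is hyperplane-complementary (its simplification being $AG(r-2,2)\setminus X'$), so $M\setminus e$ is circuit-complementary, and Lemma~\ref{lem: D} applied to each component gives circuit-differenceness. For $f \neq e$, I argue that every bad pair of $M$ must span $E(M)$: if a bad pair $(C_1', C_2')$ omits $f$, then $M|(C_1'\cup C_2')$ inherits it, and the circuit-complementary restriction on $M|(C_1'\triangle C_2')$ given by Sublemma~\ref{sublem: J1} (whose argument uses only minimality, not regularity) combined with Lemma~\ref{lem: characterizing hyp-comp} forces a contradiction with the rank-at-least-three condition in $\bM$.

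For the necessity direction, let $M$ be a binary excluded series minor. By Lemma~\ref{lem: B}, $M$ is cosimple (a series extension of a circuit-difference matroid is circuit-difference, ruling out 2-cocircuits). Choose a bad pair $(C_1, C_2)$ with $|C_1\cup C_2|$ minimum; since $M\setminus f$ for any $f \notin C_1\cup C_2$ would inherit this bad pair, we get $E(M) = C_1\cup C_2$. Let $Z = C_1\triangle C_2$ and $K = C_1\cap C_2$. The argument of Sublemma~\ref{sublem: J1} uses only minimality and shows $M|Z$ is circuit-complementary; dually, $M^*/K$ is hyperplane-complementary. An exchange argument paralleling the use of Lemma~\ref{lemma I'} in the proof of Theorem~\ref{thm: main} — defining the minimal $X\subseteq K$ that occurs as the $K$-intersection of a circuit of $M$, deducing that $X$ is a series class of $M|(Z\cup X)$, and using the cosimplicity of $M$ — forces $|K|=1$. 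Writing $K = \{e\}$, we have $M^*/e$ hyperplane-complementary, with no loops (since $M$ is connected, $e$ is not a coloop of $M^*$). Applying Lemma~\ref{lem: characterizing hyp-comp} to $\si(M^*/e)$ gives $\si(M^*/e) \cong AG(R-2, 2)\setminus X'$ where $R = r(M^*) \geq 3$. Finally, $M^*$ is a simple binary rank-$R$ single-element extension of $\si(M^*/e)$; using the uniqueness (noted before Lemma~\ref{lem: characterizing hyp-comp}) of the simple rank-$R$ extension of $AG(R-1,2)$ by a point outside its representation — and identifying $e$ with such a point via the triangle-through-$e$ structure induced by the pairings in $M^*/e$ — we conclude $M^* \cong [AG(R-1,2)+e]\setminus X$ for an appropriate $X$ satisfying the hyperplane-complementarity condition, so $M^* \in \bM$.

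The main obstacles are, on the necessity side, reducing $|K|$ to $1$ and then correctly identifying the single-element extension as coming from $AG(R-1,2)+e$ rather than some non-isomorphic extension of $\si(M^*/e)$; and on the sufficiency side, establishing that every bad pair spans $E(M)$, which requires a careful structural contraction argument using the hyperplane-complementarity of $N$ to rule out smaller bad-pair witnesses.
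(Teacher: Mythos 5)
Your overall strategy matches the paper's (minimal bad pair covering $E(M)$, reduction of $C_1\cap C_2$ to a singleton, dualizing complementarity of circuits to complementarity of hyperplanes, and in the converse building a bad pair from two complementary affine hyperplanes), but there are two genuine gaps, one in each direction.

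In the necessity direction, the complementarity statement you extract from the Sublemma~\ref{sublem: J1}-type argument concerns circuits of $M$ contained in $Z=C_1\triangle C_2$, i.e.\ circuits of $M\setminus e$; dually this says $M^*/e$ is hyperplane-complementary. But membership in $\bM$ requires that $M^*\setminus e\cong AG(r-1,2)\setminus X$ be hyperplane-complementary, equivalently that $M/e$ be circuit-complementary --- a statement about circuits of $M$ \emph{through} $e$, which needs its own argument (the paper supplies one, showing that for any circuit $D$ of $M/e$ the set $(C_1\triangle C_2)-D$ is again a circuit of $M/e$ by producing and then ruling out a smaller circuit $D''$ through $e$). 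Your attempt to recover $M^*$ from $\si(M^*/e)$ compounds the problem: $M^*$ is a \emph{coextension} of $M^*/e$, not a single-element extension of its simplification, so the uniqueness of the extension $AG(r-1,2)+e$ does not apply, and the ``triangle-through-$e$ pairing'' you invoke is precisely the tip structure you would need to prove. The paper avoids all of this by working with $M^*\setminus x$ directly and noting that $C_1-x$ and $C_2-x$ are complementary hyperplanes of the simple matroid $M^*$. (Your reduction to $|C_1\cap C_2|=1$ is also only gestured at; the paper's version is a short direct argument from cosimplicity: a circuit $D$ with $x\in D$, $y\notin D$ forces $C_1\triangle D$, $C_2\triangle D$, and hence $C_1\triangle C_2$ to be circuits.)

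In the sufficiency direction, the claim that ``every bad pair of $M$ spans $E(M)$'' is exactly the statement that every $M\setminus f$ is circuit-difference, so it cannot be taken as the method of proof; and the justification offered (Sublemma~\ref{sublem: J1} plus Lemma~\ref{lem: characterizing hyp-comp} contradicting $r\geq 3$) does not assemble into an argument --- a circuit-complementary restriction $M|(C_1'\triangle C_2')$ carries no rank obstruction by itself. The paper's proof of this step (\ref{4.7.3}) is a genuine case analysis on the position of the tip $e$ relative to a putative bad pair $D_1,D_2$ of $M\setminus f$, and it leans on two facts you would still need: that $M/e$ is circuit-complementary (hence circuit-difference by Lemma~\ref{lem: D}), and that no circuit of $M\setminus e$ has $e$ in its closure (\ref{4.7.2}). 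Your treatment of the case $f=e$ via $M^*/e$ can be made to work but again requires verifying hyperplane-complementarity of a contraction, which is not immediate from the hypothesis on $X$.
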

\begin{proof}
	Let $M$ be an excluded series minor for the class of circuit-difference matroids. By Lemma~\ref{lem: B}, $M$ is cosimple. 
	Let $C_1$ and $C_2$ be intersecting circuits of $M$  such that $C_1 \triangle C_2$ is not a circuit and $|C_1\cup C_2|$ is minimal. 
	\begin{sublemma}\label{4.7.1}
		$M^*\in \bM$.
	\end{sublemma}
	Evidently, $E(M)=C_1\cup C_2$. Then $C_1$ and $C_2$ are the only circuits of $M$ containing $C_1 - C_2$ and $C_2 -  C_1$, respectively. 
	Now, letting $x\in C_1\cap C_2$, suppose that $(C_1 \cap C_2) - x$ contains an element $y$. 
Then, as $x$ and $y$ are not in series,  $M$ has a circuit $D$ containing $x$ but not $y$. As $D$   meets both $C_1$ and $C_2$, we have, by the choice of $\{C_1,C_2\}$, that $C_1\triangle D$, $C_2\triangle D$, and hence $(C_1\triangle D)\triangle (C_2\triangle D)$ are circuits of $M$. This last circuit is $C_1 \triangle C_2$, so we have a contradiction. Thus $C_1\cap C_2 = \{x\}$. 
	
To see that $M/x$ is circuit-complementary, let $D$ be a circuit of $M/x$ other than $C_1 - x$ or $C_2 - x$. Then either $D$ or $D\cup x$ is a circuit $D'$ of $M$, and  $D'$ must meet $C_1-C_2$ and $C_2 - C_1$.  
We show next that  $(C_1 \triangle C_2)  - D$ is a circuit of $M/x$. Suppose it is not. Then $x \not\in D'$ and $M$ has a circuit $D''$ containing $x$ such 
that $D'' \subsetneqq(C_1 \triangle C_2) - D$. By the choice of $\{C_1,C_2\}$, we see that $C_1\triangle D''$ and hence $(C_1\triangle D'')\triangle C_2$ is a circuit of $M$.
As this circuit properly contains   $D''$, we have a contradiction. We conclude that  $(C_1 \triangle C_2)  - D$ is a circuit of $M/x$, so 
$M/x$ is circuit-complementary. 
Therefore, $M^*\delete x$ is hyperplane-complementary. As $M$ is cosimple, $M^*$ is simple. Moreover, $M^*$ has $C_1 - x$ and $C_2 - x$ as hyperplanes, so $M^*$ has the form $[AG(r-1,2) + e]\delete X$ where $AG(r-1,2) \delete X$ is hyperplane-complementary of rank $r$. 
Since $M^*$ is connected, $r(M^*) \ge 2$. But if $r(M^*) = 2$, then $M^* \cong U_{2,3}$, so $M \cong U_{1,3}$ and $M$ is circuit-difference, a contradiction. 
Thus  $M^*\in \bM$, so \ref{4.7.1} holds.
	
	To prove the converse, let  $M^* = [AG(r-1,2) + e] \delete X$ where $AG(r-1,2)\delete X$ is hyperplane-complementary of rank $r$ and $r \ge 3$. 
By Lemma~\ref{lem: characterizing hyp-comp}, $AG(r-1,2)|X$	does not contain a copy of $AG(r-3,2)$. Consider $AG(r-1,2) + e$ and let $H_0$ be the hyperplane of $PG(r-1,2)$ whose deletion gives $AG(r-1,2)$. Take a rank-$(r-2)$ flat $F$ of $PG(r-1,2)$ that is contained in $H_0$ and avoids $e$. Apart from $H_0$, there are exactly two   hyperplanes, $H_1$ and $H_2$, of $PG(r-1,2)$ that contain $F$. Then $H_1 - H_0$ and $H_2 - H_0$ are hyperplanes of $AG(r-1,2) + e$, and $H_1 - (H_0\cup X)$ and $H_2 - (H_0\cup X)$ are hyperplanes of $[AG(r-1,2) + e]\delete X$. The complements of these two hyperplanes  are circuits $C_1,C_2$ of $M$ that meet in the  element $e$.   We now note that $C_1\triangle C_2$ is not a circuit of $M$   otherwise $\{e\}$ is a hyperplane of $M^*$, and we obtain the contradiction that  $r(M^*) \le 2$. Hence $M$ is not circuit-difference.
	
	\begin{sublemma}\label{4.7.2}
		If $D$ is a circuit of $M\delete e$, then $e\notin \cl(D)$.
	\end{sublemma}
	Suppose   that $e\in \cl(D)$ for some circuit $D$ of $M\delete e$. Then there is a partition $\{X_1,X_2\}$ of $D$ such that $X_i$ is a circuit of $M/e$ for both $i$. As $M/e$ is circuit-complementary having $X_1$ and $X_2$ as disjoint circuits, $X_1 \cup X_2 = E(M/e) = E(M)  - e$. This is a contradiction as $X_1 \cup X_2 = D \subsetneqq E(M)  - e$.  Hence \ref{4.7.2} holds.
	
	\begin{sublemma}\label{4.7.3}
		$M\delete f$ is circuit-difference for all $f$ in $E(M)$.
	\end{sublemma}
	Suppose some $M\delete f$ is not circuit-difference. Then it has a pair of intersecting circuits $D_1,D_2$ such that $D_1 \triangle D_2$ contains a pair of disjoint circuits $K_1,K_2$. Suppose first that both $D_1$ and $D_2$ avoid $e$. Then so do $K_1$ and $K_2$. Thus, by \ref{4.7.2}, none of $D_1, D_2, K_1$, or $K_2$ has $e$ is in its closure. Hence all of $D_1, D_2, K_1$, and $K_2$ are circuits of $M/e$, so $M/e$ is not circuit-difference, a  contradiction. Hence at least one of $D_1$ and $D_2$ must contain $e$, so $f \neq e$. 
	
	Now suppose $e\in D_1-  D_2$ and $e \in K_1$. Then $D_1-e$ and $D_2$ are intersecting circuits of $M/e$ with circuits $K_1-e$ and $K_2$ in their symmetric difference. This   again  contradicts the fact that $M/e$ is circuit-difference. Hence, by symmetry, we must  have that $e\in D_1\cap D_2$.  Consequently, $K_1$, $K_2$, $D_1-e$, and $D_2-e$ are circuits of $M/e$. If $D_1\cap D_2=\{e\}$, then $D_1-e$ and $D_2-e$ are disjoint. Thus their union is $E(M/e)$. But this union avoids $f$, a contradiction. Hence  $(D_1\cap D_2)-e$ must be non-empty. But then $D_1-e$ and $D_2-e$ are intersecting circuits of $M/e$ so their symmetric difference, which equals $D_1 \triangle D_2$, is a circuit of $M/e$. However, this symmetric difference contains $K_1$ and $K_2$, which  are circuits of $M/e$. This contradiction completes the proof of \ref{4.7.3}. 
	
	As $M^*$ is  simple,  $M$ is cosimple and hence no series contractions can be performed. Thus, by \ref{4.7.3}, every series minor of $M$ is circuit-difference and the theorem holds. 
\end{proof}

The next result follows immediately by combining the last theorem with Tutte's excluded-minor characterization of binary matroids~\cite{Tutte}.

\begin{corollary}
A matroid $M$ is an excluded series minor for the class of circuit-difference matroids if and only if $M \cong U_{n,n+2}$ for some $n \ge 2$, or $M^*$ can be obtained from $AG(r-1,2) + e$ for some $r \ge 3$ by deleting some set $X$ such that $e \not\in X$ and $AG(r-1,2)|X$ does not contain a copy of $AG(r-3,2)$. 
\end{corollary}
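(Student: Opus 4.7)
The plan is to split the argument into the binary and non-binary cases. For binary $M$, the preceding theorem already gives the stated condition $M^* \in \bM$, so all the new work is in handling non-binary excluded series minors. Tutte's theorem provides the key input: $U_{2,4}$ is the unique excluded minor for the class of binary matroids, so every non-binary matroid has a $U_{2,4}$-minor.

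I would first verify that each $U_{n,n+2}$ with $n \ge 2$ is genuinely an excluded series minor. It contains $U_{2,4}$ as a minor (contract any $n-2$ elements of a basis), so it is non-binary, and hence not circuit-difference. All its cocircuits have size $3$, so it is cosimple, meaning every series minor is a single-element deletion. Each such deletion is isomorphic to $U_{n,n+1}$, whose only circuit is the whole ground set, making it vacuously circuit-difference.

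For the converse, let $M$ be a non-binary excluded series minor; the goal is to show $M \cong U_{n,n+2}$ with $n = r(M) \ge 2$. The standard facts that adding a loop, a parallel element, a coloop, or a series element each preserve binariness let me conclude that $M$ is simple and cosimple: each such feature would produce a proper series minor whose binariness would force $M$ itself to be binary, contradicting non-binariness. Consequently every proper series minor is a deletion, and $M \delete e$ is binary for every $e \in E(M)$.

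The main calculation is then to pin down $M$ via its $U_{2,4}$-minors. Given any such minor $M/C \delete D$, if $e \in D$ then the same minor witnesses $M \delete e$ as non-binary, a contradiction; so $D = \emptyset$ and every $U_{2,4}$-minor of $M$ has the form $M/C$. A parallel argument rules out any dependency in $C$: if $c \in C$ were to lie in a circuit $C' \subseteq C$, then $c$ would be a loop of $M/(C - c)$, so contracting $c$ would coincide with deleting it, yielding a $U_{2,4}$-minor of $M \delete c$. Thus $C$ is independent, $|C| = r(M) - 2$, and $|E(M)| = r(M) + 2$. Therefore $M^*$ has rank $2$ on $r(M) + 2$ elements and is simple (since $M$ is cosimple), forcing $M^* \cong U_{2,r(M)+2}$ and hence $M \cong U_{n,n+2}$ with $n = r(M) \ge 2$. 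I expect this last analysis --- particularly the independence of $C$ --- to be the main obstacle; everything else is essentially a packaging of the preceding theorem with Tutte's characterization.
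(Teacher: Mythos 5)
Your argument is correct and follows exactly the route the paper intends: the paper gives no proof beyond declaring the corollary immediate from the preceding theorem together with Tutte's excluded-minor characterization, and your analysis of the non-binary case --- using the single-element deletions' binariness to force $D=\emptyset$ and $C$ independent, so that $M^*$ is simple of rank $2$ --- correctly supplies the details identifying the non-binary excluded series minors as the matroids $U_{n,n+2}$ with $n\ge 2$. The only loose phrase is the claim that cosimplicity makes every series minor a single-element deletion (series pairs can arise after deleting), but since each $U_{n,n+2}\setminus e\cong U_{n,n+1}$ is circuit-difference and the class is closed under series minors, your conclusion stands.
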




\begin{thebibliography}{99}

\bibitem{James} Oxley, J.,   {\it Matroid Theory}, Second edition, Oxford University Press, New York, 2011. 
\bibitem{simon} Pfeil, S., {\it On Properties of Matroid Connectivity}, Ph.D. dissertation, Louisiana State University, 2016. 
\bibitem{Seymour} Seymour, P.D.,    Decomposition of regular matroids,  \emph{J. Combin. Theory Ser. B} \textbf{28} (1980), 305-359.
\bibitem{Tutte} Tutte, W.,    A homotopy theorem for matroids, I, II,  \emph{Trans. Amer. Math. Soc.} \textbf{88} (1958), 144-174.
 

\end{thebibliography}
\end{document}